\documentclass[11pt]{amsart}

\usepackage{amsthm,amssymb}
\usepackage{MnSymbol} 
\usepackage{fullpage}
\usepackage[utf8]{inputenc}
\usepackage{graphicx} 
\usepackage{subcaption}
\usepackage{amssymb}
\usepackage{xcolor}
\usepackage{gensymb}
\usepackage{float}
\usepackage{soul,xcolor}
\usepackage[utf8]{inputenc}
\usepackage[T1]{fontenc}
\usepackage[english]{babel}
\usepackage{enumitem}
\usepackage{caption}

\newcommand{\sign}{\mathsf{sign}}

\newcommand{\wed}{\mathsf{Wedge}}

\newcommand{\sgeom}{\mathsf{SGeom}}
\newcommand{\wit}{\mathsf{Wit}}
\newcommand{\inc}{\mathsf{in}}

\renewcommand{\vec}[1]{\ensuremath{\mathbf{#1}}}

\usepackage{soul,xcolor}

\usepackage{graphicx}

\newcommand{\sd}{\mathbb{S}^d}
\newcommand{\stw}{\mathbb{S}^2}

\newcommand{\rtw}{\mathbb{R}^2}
\newcommand{\rth}{\mathbb{R}^3}

\newtheorem{proposition}{Proposition}
\newtheorem{theorem}{Theorem}
\newtheorem{lemma}{Lemma}
\newtheorem{corollary}{Corollary}
\newtheorem{question}{Question}

\newtheorem{remark}{Remark}

\begin{document}
\setstcolor{red}

\title[Strong geometry: knots]{Strong geometry: knots} 

\author[Baptiste Gros]{Baptiste Gros}
\address{IMAG, Univ.\ Montpellier, CNRS, Montpellier, France}
\email{baptiste.gros@umontpellier.fr}
\author[Jorge L. Ram\'irez Alfons\'in]{Jorge L. Ram\'irez Alfons\'in}
\address{IMAG, Univ.\ Montpellier, CNRS, Montpellier, France}
\email{jorge.ramirez-alfonsin@umontpellier.fr}

\subjclass[2010]{52C140, 57K10}

\keywords{Oriented Matroid, Chirotope, Knot}

\begin{abstract} In this paper, we introduce the notion of {\em strong geometry}, a structure composed by both the chirotope of a set of points $X$ in the $d$-dimensional space and the {\em wedge} chirotope which is the specific {\em adjoint} chirotope induced by the hyperplanes spanned by $X$. We present various properties relating these two chirotopes, for instance,  by introducing the {\em witness} chirotope, we are able to give a formula expressing the wedge chirotope in terms of the usual chirotope.
With this on hand, we answer positively a strong geometry version of a question due to M. Las Vergnas about reconstructing polygonal knots via chirotopes. Moreover, we also show that {\em linear spatial graphs} are determined by their corresponding strong geometries.
\end{abstract}

\maketitle

\section{Introduction}\label{sec;intro}

Many problems in discrete and convex geometry are based on finite sets of points in the Euclidean space. The combinatorial properties of the underlying point set, rather than its metric properties, are enough to solve a large number of such problems. 
The codification of the `combinatorial properties' of a point set usually refers to a natural associated notion called {\em chirotope} (initially known as {\em combinatorial geometry}, with other equivalent names including {\em order type} \cite{GP} and, in our case, {\em oriented matroid}). The chirotope captures the relative positions of small tuples of points without taking distances into account \cite{BMS}. In the affine plane, one such piece of information we can formulate in terms of chirotopes is the intersection properties of the segments spanned by the initial points. However, even though the chirotope tells us which segments intersect, it does not give any information about the relative positions of those intersection (say with respect to the spanned lines). The knowledge of the latter might play a key role in the study of certain geometric problems. For instance, Figure \ref{fig:tran} illustrates two hexagons having the same chirotope induced by their vertices (they both have the same set of circuits or equivalently set of minimal Radon partitions). However, in the hexagon on the left the lines joining opposite vertices intersect at point $p$ while they do not on the right one. 

\begin{figure}[h]
\begin{center}
 \includegraphics[width=.46\textwidth]{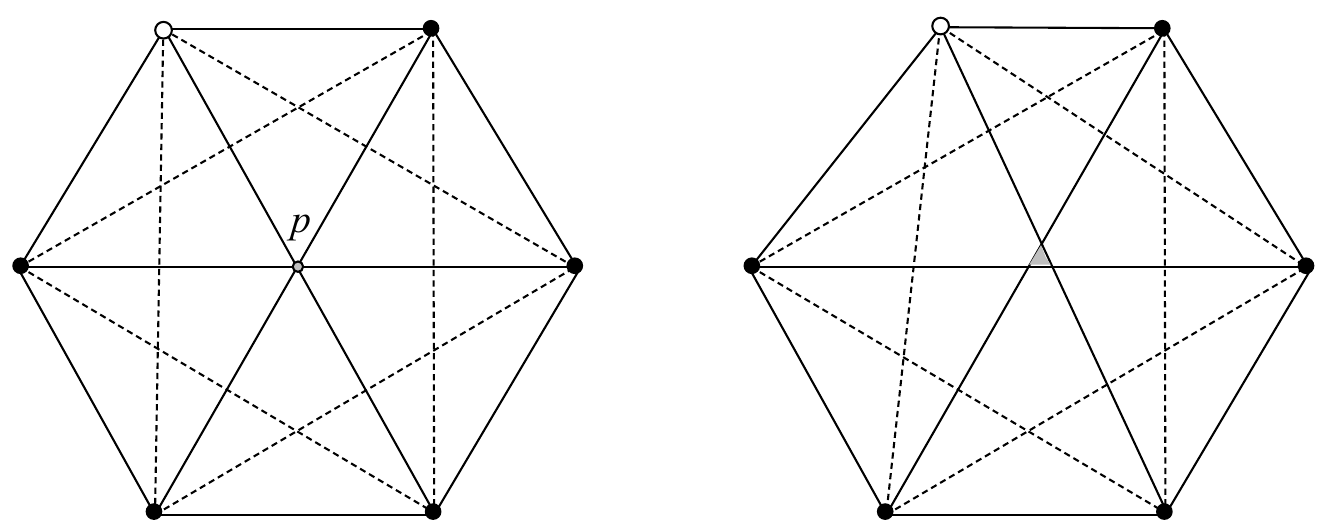}
 \caption{(Left) A regular hexagon where point $p$ (in gray) is the intersection of lines joining opposite vertices. (Right) The same regular hexagon in which a vertex (in white) has been slightly moved to the right. The lines joining opposite vertices form a triangle (in gray).}\label{fig:tran}
 \end{center}
\end{figure}

It turns out that $p$ is a {\em $0$-transversal} to the convex hull of the 4-sets (that is, $p$ intersects all 4-sets formed by the vertices). It can be easily checked that the hexagon on the right does not admit a $0$-transversal. In fact, such {\em discrete transversal} is not necessarily an
invariant of the order type, see \cite{CMMRA}. This is a typical situation in which the knowledge of the position of the intersection of two lines, with respect with the other spanned lines, is helpful (in this case for a transversality-type problem).  
\smallskip

Let $X$ be a vector configuration of $\mathbb {R}^d, d\ge 3$. We introduce the notion of {\em strong geometry} $\sgeom_{\mathsf{Lin}}(X)$ associated to $X$. This structure is composed of both the usual linear $M_{\mathsf{Lin}}(X)$ and the {\em wedge} $M_\wed(X)$ chirotopes associated to $X$. The wedge chirotope is a specific {\em adjoint} chirotope of $M_{\mathsf{Lin}}(X)$ associated to $X$ arising from the spanning vector configuration consisting of exactly one non-zero vector orthogonal to each hyperplane in $\mathbb {R}^d$ that is spanned by a subset of $X$, see  \cite[Section 3]{Cord1} (this canonical construction will be treated in detail below). This adjoint lives in the same dimension, but typically it has many more vectors than the original configuration. It is known that every rank 3 oriented matroid has an adjoint \cite{Good,Cord2,GP2}  but not every oriented matroid  has one, for instance, the V\'amos matroid has no oriented adjoint \cite{BW}. Although a realizable oriented matroid $M$ always admits an adjoint, the latter is not uniquely determined by $M$, it depends on the particular vector configuration representing $M$ (see below). We refer the reader to \cite{BK,Che} and \cite[Section 5.3]{BMS} for further discussions on adjoints for general oriented matroids. 
 \smallskip
 
Strong geometries give helpful information, for instance, they encode nicely the combinatorics of the cells of the arrangement of the hyperplanes spanned by a set of points. In Section \ref{sec:discuss}, we discuss different contexts closely connected to strong geometries.
\smallskip

In the Section \ref{sec:sg}, we treat in detail wedge chirotopes.  In rank $3$, that is, in the affine plane, we give a formula expressing the wedge chirotope in terms of the usual chirotope for some specific given configuration (see Eq.  \eqref{eq:delta} and Proposition \ref{prop:wit1}). Our approach allows us to get a `dimension reduction'  formula for any dimension. More precisely, we show that the chirotope of $M_\wed(X)$ determines the chirotope of $M_{\mathsf{Aff}}(X)$ up to orientation depending on the parity of the dimension (see Theorem \ref{thm:lin-wit} and Corollary \ref{cor:wedd}).  In order to show the latter, we introduce the {\em witness chirotopes} which play a central role for our purpose.
\smallskip

We then focus our attention  on the study of a first topological application of strong geometries in connection with geometric knots. A {\em knot} is an embeddings of $S^1$ into $\rth$ up to ambiant isotopy.  In order to avoid pathologies, it is common to study tame knots, in particular, {\em polygonal knots}, that is, closed, piecewise linear loops in $\rth$ with no self-intersections. Not much is known  about the interplay between the classic setting and {\em geometric knots}, which are polygonal knots with a fixed number of segments of variable length. Geometric knots are more rigid than smooth knots and are better suited to describe macromolecules (such as DNA) in polymer chemistry and molecular biology, but also far more resistant to investigation.
\smallskip

Let $X=(x_0, \dots, x_{n-1})$  be a $n$-tuple of points in $\rth$ in general position. Let $K_X$ be the polygonal knot defined by the segments $[x_i, x_{i+1}]$ (addition $\pmod n$).
Michel Las Vergnas \cite{LV} has put forward the following

\begin{question} Let $X= (x_0, \dots, x_{n-1})$ be a $n$-tuple of points in $\rth$ in general position. Is it true that $K_X$ only depends on the chirotope induced by $x_0,\dots ,x_{n-1}$ ?
\end{question}

By considering the affine version of strong geometries arising from a set of points $X$ in the space, we are able to give a positive answer to a strong geometry version of Las Vergnas' question, in Section \ref{sec:spatgraknot}. More conveniently, the result is stated in the following equivalent way.

\begin{theorem}\label{thm:main} Let $X=(x_0, \dots, x_{n-1})$ and $X'=(x'_0, \dots, x'_{n-1})$ be two $n$-tuples of points in $\rth$ in general position. Let $\sgeom_{\mathsf{Aff}}(X)$ and $\sgeom_{\mathsf{Aff}}(X')$ be the strong geometries associated to $X$ and $X'$ respectively. If $\sgeom_{\mathsf{Aff}}(X)$ is isomorphic to $\sgeom_{\mathsf{Aff}}(X')$ then $K_X$ is isotopic to $K_{X'}$.
\end{theorem}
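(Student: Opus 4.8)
The plan is to show that the isomorphism of strong geometries provides enough combinatorial data to reconstruct the knot type of $K_X$, by exhibiting an explicit ambient isotopy (or a finite sequence of triangle moves) that only depends on the isomorphism type of $\sgeom_{\mathsf{Aff}}(X)$. First I would fix a generic direction of projection and push the whole configuration down to $\rtw$; for a projection to be faithful (recording the knot diagram with all crossings) it suffices that no three of the $x_i$ are collinear in the image and that the over/under information at each crossing is recorded. The crucial observation is that a crossing of the diagram between the segment $[x_i,x_{i+1}]$ and the segment $[x_j,x_{j+1}]$ occurs precisely when these two segments "cross" in the sense of the affine chirotope $M_{\mathsf{Aff}}(X)$ projected along the chosen direction — this is classical order-type information — while the \emph{over/under} decision is exactly the extra datum that the affine chirotope alone fails to capture, but which, I claim, is encoded by $M_\wed(X)$ (equivalently the witness chirotope of Theorem~\ref{thm:lin-wit}). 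Concretely, whether $[x_i,x_{i+1}]$ passes over or under $[x_j,x_{j+1}]$ in the projection is a sign that compares the point of the plane spanned by $x_i,x_{i+1}$ and the projection direction against the line spanned by $x_j,x_{j+1}$ — this is precisely a wedge/adjoint evaluation, so it is determined by $\sgeom_{\mathsf{Aff}}(X)$.

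Second, I would make the reduction to a canonical position: choosing coordinates so that the projection direction is the last coordinate axis, the sign $M_{\mathsf{Aff}}(X)(i,i{+}1,j)$ together with $M_{\mathsf{Aff}}(X)(i,i{+}1,j{+}1)$ tells us the pair of edges crosses, and a single wedge evaluation — the sign of the chirotope of $M_\wed(X)$ on the hyperplane dual to $\{x_i,x_{i+1}\}$ against the hyperplane dual to $\{x_j,x_{j+1}\}$ — tells us the relative height of the two edges over the crossing point. Using the dimension-reduction formula of Theorem~\ref{thm:lin-wit}/Corollary~\ref{cor:wedd}, all of this is recovered from the isomorphism class of $\sgeom_{\mathsf{Aff}}(X)$ alone (the possible global sign ambiguity depending on the parity of the dimension does not affect the knot type, since reversing all crossings of a knot diagram gives the mirror of the knot only if one also reverses the ambient orientation, and here the sign flip is global and orientation-preserving, hence yields an isotopic knot). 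Therefore $\sgeom_{\mathsf{Aff}}(X)\cong\sgeom_{\mathsf{Aff}}(X')$ produces knot diagrams $D$ and $D'$ that are \emph{combinatorially identical} as Gauss/PD codes.

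Third, having matched the diagrams combinatorially, I would invoke the standard fact from piecewise-linear knot theory that two polygonal knots in $\rth$ whose generic projections yield the same knot diagram (same underlying 4-valent planar graph with the same over/under data at every vertex) are ambient isotopic: one realizes the planar isomorphism by an isotopy of $\rtw$, lifts it, and then corrects heights by a further ambient isotopy supported in small neighborhoods of the crossings, which is possible exactly because the over/under data agrees. This is the step that needs care: one must check that the general-position hypothesis on $X$ and $X'$ guarantees the chosen projections are regular (finitely many transverse double points, no cusps, no triple points), and that if a single projection direction fails to be simultaneously generic for both configurations one can perturb it — the set of bad directions is a measure-zero union of great circles determined by the chirotopes, so a common good direction exists. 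I expect this last point — verifying that a single direction can be chosen regular for both $X$ and $X'$ and that the combinatorial data the strong geometry gives genuinely determines the over/under sign (i.e. that the wedge chirotope "sees" the third coordinate comparison) — to be the main obstacle, and it is precisely here that the formula expressing the wedge chirotope in terms of the usual chirotope and the witness chirotope (Proposition~\ref{prop:wit1}, Theorem~\ref{thm:lin-wit}) does the essential work.
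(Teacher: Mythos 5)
Your overall strategy --- extract a knot diagram whose combinatorics are determined by the strong geometry, then invoke the fact that a diagram determines the knot type --- matches the paper's, but the projection you choose creates a genuine gap. You project orthogonally along a ``generic direction'' and treat the existence of a direction simultaneously generic for $X$ and $X'$ as the main technical point. The real problem is elsewhere: the projection direction is \emph{external} data that is not recorded by $\sgeom_{\mathsf{Aff}}(X)$. Which pairs of projected edges cross, which strand is on top, and in which order several crossings occur along one edge all depend on the position of the direction vector relative to the configuration; this is the order type of $X$ \emph{together with} the point at infinity in that direction, not of $X$ alone. Since $X$ and $X'$ are different configurations, there is no way to transport ``the same'' direction from one to the other, and a direction generic for both can yield combinatorially different diagrams even when the strong geometries are isomorphic. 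So your conclusion that $D$ and $D'$ are identical Gauss/PD codes does not follow from the hypothesis.

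The paper avoids this by projecting \emph{radially from $x_0$}, a vertex of the knot itself, so that the centre of projection belongs to the configuration and every sign needed for the diagram is an evaluation of $\chi$ or $\chi_{\Lambda}$ at tuples containing $x_0$: crossings and over/under come from Radon partitions of $\{x_0,x_i,x_{i+1},x_{i'},x_{i'+1}\}$ and from $\chi(x_i,x_{i+1},x_0,x_{i'+1})$, while the order of several crossings along a single arc comes from the witness wedge chirotope via Proposition \ref{prop:wit2}. The price of this intrinsic choice is that the two edges incident to $x_0$ collapse, so the projection is an open diagram --- a knotoid --- and the paper must first prove Lemma \ref{lem:gauss}, that a knotoid diagram is recovered from its Gauss diagram; your proposal has no counterpart of this step. (Section \ref{sec:discuss} of the paper notes explicitly that replacing this by a projection from an auxiliary external point would require adding a common witness point to both configurations while preserving the strong geometry, which is not straightforward for $d\ge 3$.) A secondary slip: your dismissal of the global sign ambiguity is incorrect as stated --- reversing every crossing of a diagram yields the mirror knot, which need not be isotopic to the original --- although in the actual setup the affine chirotopes are part of the data and the issue does not arise.
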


Since the chirotope of $M_\wed(X)$ determines the chirotope of $M_{\mathsf{Lin}}(X)$ in dimension 4 (see Corollary \ref{cor:wed4}) then the following result is a straightforward consequence of Theorem \ref{thm:main}.

\begin{corollary} Let $X=(x_0, \dots, x_{n-1})$ and $X'=(x'_0, \dots, x'_{n-1})$ be two $n$-tuples of points in $\rth$ in general position. If $M_\wed(X)$ is isomorphic to $M_\wed(X')$ then $K_X$ is isotopic $K_{X'}$.
\end{corollary}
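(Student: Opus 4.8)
The plan is to read this off Theorem~\ref{thm:main}, the only missing ingredient being the dimension-reduction result of Corollary~\ref{cor:wed4}. Since the points of $X$ and $X'$ lie in $\rth$, I would first pass to the homogenization in $\rere^{4}$, so that $M_{\mathsf{Aff}}(X)$ is exactly the linear chirotope $M_{\mathsf{Lin}}$ of the resulting rank-$4$ configuration and the planes of $\rth$ spanned by $X$ are in bijection with the linear hyperplanes it spans. This is the ``dimension $4$'' situation of Corollary~\ref{cor:wed4}, which tells us that the chirotope $M_{\mathsf{Aff}}(X)=M_{\mathsf{Lin}}(X)$ is recovered from $M_\wed(X)$ alone; moreover, as this recovery is effected by the explicit witness/wedge formula underlying Theorem~\ref{thm:lin-wit}, it is a \emph{canonical} construction, and in particular it commutes with isomorphisms.

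So let $\psi\colon M_\wed(X)\to M_\wed(X')$ be an isomorphism, i.e.\ a bijection between the planes spanned by $X$ and those spanned by $X'$ under which the two wedge chirotopes correspond. Transporting the canonical reconstruction of Corollary~\ref{cor:wed4} along $\psi$ produces a bijection $\phi\colon X\to X'$ that is an isomorphism $M_{\mathsf{Aff}}(X)\to M_{\mathsf{Aff}}(X')$ and whose induced action on spanned planes is precisely $\psi$. Hence $\phi$ and $\psi$ are compatible, i.e.\ they agree on the point--plane incidences, so the pair $(\phi,\psi)$ is an isomorphism of strong geometries $\sgeom_{\mathsf{Aff}}(X)\to\sgeom_{\mathsf{Aff}}(X')$. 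Feeding this into Theorem~\ref{thm:main} gives at once that $K_X$ is isotopic to $K_{X'}$.

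Most of this is bookkeeping, and the single point requiring genuine care is the claim that the bijection $\phi$ extracted from $\psi$ via the dimension-reduction formula really does assemble together with $\psi$ into an isomorphism of strong geometries---equivalently, that the formula of Theorem~\ref{thm:lin-wit} is natural with respect to isomorphisms of wedge chirotopes. I expect this naturality to be the only (mild) obstacle; once it is in place, the corollary follows immediately from Theorem~\ref{thm:main}.
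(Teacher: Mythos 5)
Your proposal is correct and follows essentially the same route as the paper, which derives this corollary in one line by combining Corollary~\ref{cor:wed4} (the wedge chirotope determines the linear chirotope in the rank-$4$ homogenization) with Theorem~\ref{thm:main}. Your extra attention to the naturality of the reconstruction under isomorphisms is a reasonable refinement of a point the paper leaves implicit, but it does not change the argument.
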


In order to obtain Theorem \ref{thm:main}, we prove that {\em knotoids} are determined by their Gauss diagram (see Lemma \ref{lem:gauss}).
The arguments for the latter are extended to show that {\em graphoids} are also determined by their Gauss diagram (see Lemma \ref{lem:gaussgraphdiag}).
This allows us to show, in Section \ref{sec:geomknot}, that {\em linear spatial graphs} are determined by their corresponding strong geometries as well (see Theorem \ref{thm:main2}).


\section{Oriented matroid preliminaries}

For general background in oriented matroid theory we refer the reader to the book \cite{BMS}. An {\em oriented matroid} $M=(E,\chi)$ of rank $r$ is a finite set $E=\{1,\dots ,n\}$ together with a function $\chi:E^r\rightarrow \{-1,0,1\}$, called {\em chirotope} verifying the following conditions

(CH0)  $\chi$ is not always zero,

(CH1)  $\chi$ is {\em alternating}, that is, for all $\{i_1,\dots ,i_r\}\subset E$ and all $\sigma\in \mathfrak{S}_r$, we have
$$\chi(i_{\sigma(1)},\dots ,i_{\sigma(r)})=\epsilon(\sigma)\chi(i_1,\dots ,i_r),$$

(CH3)  for all $\{i_1,\dots ,i_r\}, \{j_1,\dots ,j_r\}\subset E$ such that 
$$\chi(j_k,i_2,\dots ,i_r)\chi(j_1,\dots ,j_{k-1},i_1,j_{k+1},\dots ,j_r)\ge 0$$
for all $k$, then
$$\chi(i_1,\dots ,i_r) \chi(j_1,\dots ,j_r)\ge0.$$

Let $1\le d\le n$ be integers. To each configuration of vectors $X=(\vec x_1,\dots ,\vec x_n) \in (\mathbb{R}^d)^n$, we may associate an oriented matroid $M=(E,\chi)$ of rank $d$  by taking 
$$\chi({i_1},\dots ,{i_d})=\Delta(\vec x_{i_1},\dots , \vec x_{i_d})\in \{-1,0,1\}.$$

where $\Delta =\sign \circ \det$, that is, the sign of the determinant. 
\smallskip

$M$ is called {\em linear} or {\em vectorial} and it is denoted by $M_{\mathsf{Lin}}(X)$.

We may also associate an oriented matroid $M=(E,\chi)$ of rank $r(M)=d+1$ to a configuration of points $X=\{x_1,\dots ,x_n\}$ in the affine space $\mathbb{R}^d$ by taking  

$$\chi({i_1},\dots ,{i_d})=\Delta \left(\begin{array}{ccc} 1 & \cdots & 1 \\ x_{i_1} &\cdots &x_{i_d}\end{array}\right) .$$

$M$ is called {\em affine} and it is denoted by $M_{\mathsf{Aff}}(X)$.
\smallskip

We notice that we can pass from affine to linear matroids by considering the inclusion 
$$\inc:  \mathbb{R}^{d} \simeq \{1\} \times \mathbb{R}^d \subset \mathbb{R}^{d+1}.$$
 We may thus write $\vec x$ instead of $x$ to distinguish vectors from points. From now on, when we refer $\mathbb{R}^d$ as {\em affine space}, we mean that it is included in $\mathbb{R}^{d+1}$ as $\{1\} \times \mathbb{R}^{d}$.
\smallskip
 
Let us recall that a set $P=\{p_1,\dots ,p_{n}\}$ of $n\ge d+2$ points in $\mathbb{R}^d$ always admits a {\em Radon partition}, that is, a partition $P=A\sqcup B$ such that $conv(\{p_i \ |\  p_i\in A\})\cap conv(\{p_i \ |\  p_i\in B\})\neq\emptyset$. It is known that a circuit $C=C^+\cup C^-$ of an affine oriented matroid associated to a set of points $\{y_1,\dots ,y_{n}\}$ corresponds to a minimal Radon partition, that is,  $conv(\{y_i \ |\  i\in C^+\})\cap conv(\{y_i \ |\ i\in C^-\})\neq\emptyset$. If $(x_1, \dots, x_{d+1})$ is generic then a Radon partition of $\{x_1,\dots ,x_{d+2}\}$ admits both $x_{d+1}$ and $x_{d+2}$ in either $A$ or $B$ if and only if $\chi(x_1,\dots,x_d,x_{d+1})=-\chi(x_1,\dots,x_d,x_{d+2})$. Therefore, Radon's partitions can be recovered from the chirotope.


\section{Wedge matroids}\label{sec:sg}

We shall first discuss the construction of wedge oriented matroids for $d=2$ for a better comprehension. We then treat the general case $d\ge 3$.  

\subsection{Planar case} Let $X$ be a set of points in $\rtw$. As mentioned above, strong geometries intend to capture more geometric information than ordinary chirotopes, specifically, the relative positions of triples of lines generated by points in $X$.  As explained above, it will be more convenient to consider $X$ as included in $\mathbb{R}^3$.
Since we are only interested in computing signs of multi-linear expressions then we do not worry about the norm of the vectors in $\rth$. Therefore, the points in $X$ will be associated to vectors living in $\rth/(\mathbb{R_+})$ which is $\sd \cup\{0\}$.
\smallskip

Let $x$ and $y$ be two points in the affine space $\rtw\subset \rth$. Let $l_{x,y}$ be the equator arising from the intersection of the (oriented) plane $h_{x,y}$ spanned by $x$ and $y$ and $\mathbb{S}^2$. Let $\vec x$ and $\vec y$ be the vectors on $l_{x,y}$ arising from intersection of $\stw$ and the line segments $[0,x]$ and $[0,y]$ respectively. Equator $l_{x,y}$ is oriented from $\vec x$ to $\vec y$, see Figure \ref{fig:hyperplanes}. 

 \begin{figure}[H]
 \centering
\includegraphics[width=0.47\textwidth]{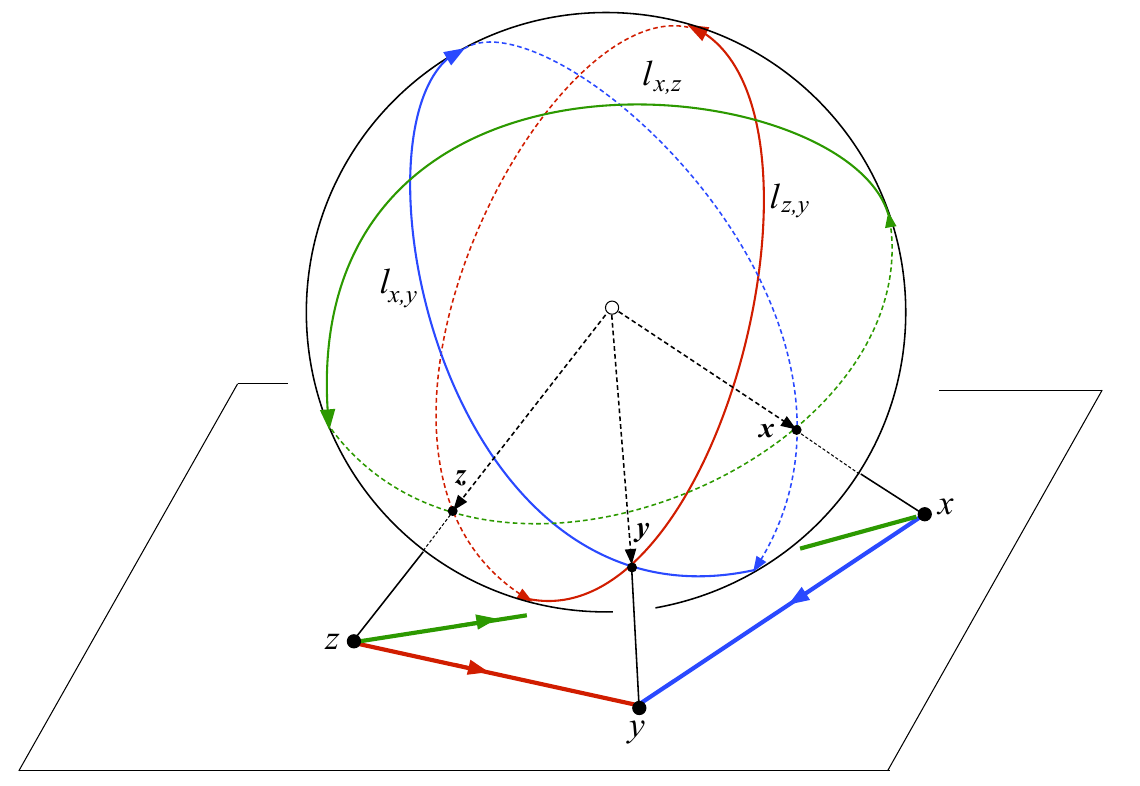}
    \caption{Unit vectors associated to points in the plane.} \label{fig:hyperplanes}
\end{figure}

If we write $\wedge$ for the image of $\times$ in $\rth/(\mathbb{R_+})$ then the equator $l_{x,y}$ is parametrized by its normal positive vector given by the product $\vec x\wedge \vec y$. 
In the degenerate case, we consider the 0 vector to be the equator between two equal or antipodal points, see Figure \ref{fig:hypercross}.

 \begin{figure}[H]
 \centering
\includegraphics[width=0.35\textwidth]{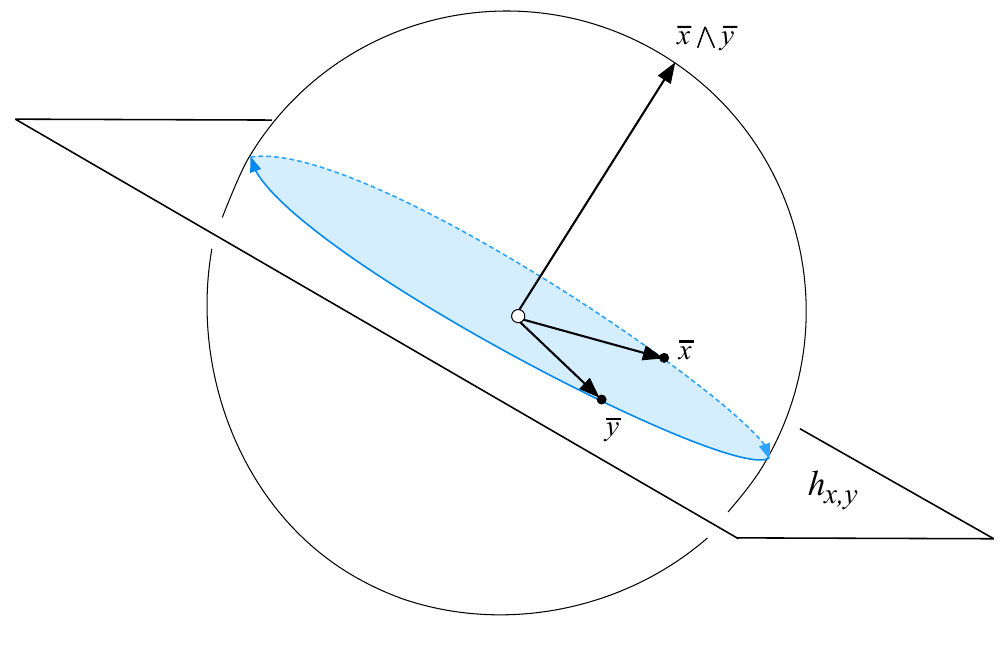}
    \caption{} \label{fig:hypercross}
\end{figure}

\begin{remark}\label{rem:pos} Let $l$ be an oriented equator of $\mathbb{S}^2$ with positive normal vector $\vec v_l$ and let $l^+$ (resp. $l^-$) be the positive (resp. negative) half-sphere delimited by $l$. Let $\vec p$ be a vector in $\stw$. Then,
$$\vec p\in\left\{\begin{array}{ll}
l^+ & \text{ if } \langle \vec v_l,\vec p\rangle > 0,\\
l^- & \text{ if } \langle \vec v_l,\vec p\rangle < 0,\\
l & \text{ if } \langle \vec v_l,\vec p\rangle=0.\\
\end{array}\right.$$
where $\langle \vec u,\vec v \rangle$ denotes the scalar product of vectors $\vec u$ and $\vec v$.
 \end{remark} 

Let $x,y$ and $z$ be points in the affine space. We commonly interpret $\Delta \left(\begin{pmatrix}
    1&1&1\\
    x&y&z
\end{pmatrix}\right)$ as indicating whether $z$ lies on the positive or negative half-space delimited by the line from $x$ to $y$.
Similarly, this sign is also given by either
\begin{equation}\label{eq:det1}
\Delta(\vec x,\vec y,\vec z)=\sign(\langle \vec x\wedge \vec y, \vec z\rangle),
\end{equation}
or the sign of the half-sphere delimited by the equator from $\vec x$ to $\vec y$ that contains $\vec z$

These observations serve as a guideline for a natural definition of the combinatorics of a tuple of oriented equators. Indeed, in affine geometry there are numerous ways that three oriented lines may met, see Figure \ref{fig:lines} while generic triplets of oriented equators can only be of two combinatorial types, see Figure \ref{fig:equat}.

 \begin{figure}[H]
 \centering
\includegraphics[width=0.7\textwidth]{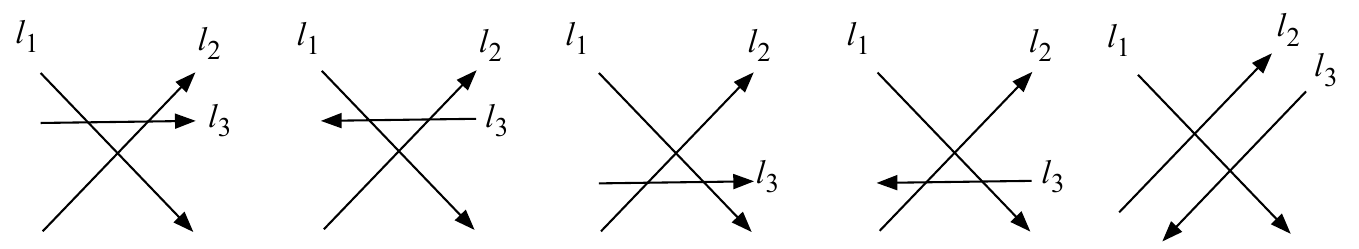}
    \caption{Some of the non-isotopic triplets of oriented lines.} \label{fig:lines}
\end{figure}

 \begin{figure}[H]
 \centering
\includegraphics[width=0.36\textwidth]{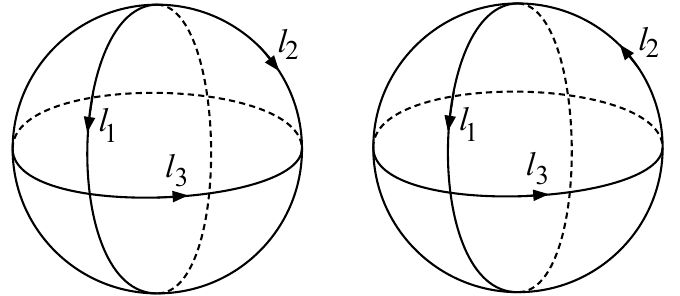}
    \caption{Positive and negative triplets of equators.} \label{fig:equat}
\end{figure}

Let $l_1$ and $l_2$ be two equators and let $\vec v_{1}$ and $\vec v_2$ be the normal vectors of the corresponding hyperplanes supporting $l_1$ and $l_2$ respectively. If $l_1$ and $l_2$ are {\em colinear}, i.e. equal or opposite, then the sign associated to any triple $(l_1, l_2, l)$ is always $0$. Otherwise, they meet at two antipodal points $p$ and $-p$, in this case, $\vec v_{1}\wedge \vec v_2$ is one of these points, we call it {\em positive intersection}, see Figure \ref{fig:inter}.

 \begin{figure}[H]
 \centering
\includegraphics[width=0.21\textwidth]{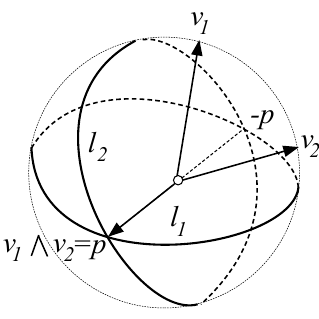}
    \caption{} \label{fig:inter}
\end{figure}

We thus have that a third equator $l_3$ admits either $p$ or $-p$ on its positive half-sphere (the other in the negative half-sphere). Analogously to the dual case above  \eqref{eq:det1}, we define the sign associated to a triple of equators $l_1, l_2$ and $l_3$ as

\begin{equation}\label{eq:det2}
\sign(l_1,l_2,l_3):=\sign( \langle \vec v_1\wedge \vec v_2, \vec v_3\rangle).
\end{equation}

This leads us to define a chirotope $\chi_{\Lambda}$ on the set of equators by

$$
\chi_{\Lambda}(i,j,k) := \sign(l_i,l_j,l_k).
$$

We call $\chi_\Lambda$ a {\em wedge} chirotope.  

Note that if $l_i = \vec x_i \wedge \vec y_i$,  $l_j = \vec x_j \wedge \vec y_j$ and $l_k = \vec x_k \wedge \vec y_k$, then we also have

\begin{equation}\label{eq:delta}
\chi_{\Lambda}(i,j,k) = \Delta\Big( (\vec x_i \wedge \vec y_i) \wedge (\vec x_j \wedge \vec y_j), \vec x_k, \vec y_k\Big).
\end{equation}

\subsection{General setting} We shall generalize the wedge chirotope to higher dimensions. To this end, we use the natural generalization of the cross product to higher dimension.


Let $d\ge 2$ and let $X= (\vec x_1,\dots ,\vec x_d)$ be a $d$-tuple of vectors in $\mathbb{R}^{d+1}$ (or in $\mathbb{S}^{d}$). If the family $(\vec x_1, \dots, \vec x_d)$ is independent then we define $\boldsymbol{\alpha}(X)$ to be the (norm 1) $(d+1)$-dimensional vector orthogonal to all $\vec x_i$'s (i.e., orthogonal to the hyperplane $h(X)$ spanned by the $\vec x_i$'s) and such that $\det(\vec x_1,\dots ,\vec x_d,\boldsymbol{\alpha}(X))>0$.  We call $\boldsymbol{\alpha}(X)$ an {\em hyperplane-vector}. If $(\vec x_1, \dots, \vec x_d$ is dependent then we set $\boldsymbol{\alpha}(X)=\vec{0}$ (in this case the hyperplane vector is {\em degenerate}).
\smallskip

Let $\mathcal{H}=\left\{h(I) \ |\  I\in [n]^d \right\}$. We may also suppose that the elements in $\mathcal{H}$ are ordered, that is, if $h_i=h(I)$ and $h_j=h(J)$  then $h_i<h_j$ if $I<J$ (in lexicographic order). 
\smallskip

Let $\Lambda(X)$ be the family of all hyperplane-vectors, that is,
$$ \Lambda(X)=\Big(\boldsymbol{\alpha}(X_I) \Big)_{I\in [n]^d } $$

For short, we may write $\Lambda(X)=\Big(\boldsymbol{\alpha}(I) \Big)_{I\in [n]^d }$.

We define the {\em wedge oriented matroid}, denoted by $M_\wed(X)$, as the oriented matroid $M_\mathsf{Lin}(\Lambda(X)) $, that is, the $(d+1)$-rank oriented matroid with base set $[n]^d$ and with chirotope function $\chi_{\Lambda}$ verifying

$$
\chi_{\Lambda}(I_1, \dots, I_{d+1}) = \Delta( \boldsymbol{\alpha}(I_1), \dots, \boldsymbol{\alpha}(I_{d+1}))
$$
for $I_1, \dots, I_{d+1} \in [n]^d $. 

Similarly as in the 2-dimensional case (i.e., rank 3), in the same flavor as Equation \eqref{eq:delta}, we have the following result which is a straight consequence of the definition of $\boldsymbol{\alpha}$.
\begin{lemma}\label{lem:delta}
    Let $X_1, \dots, X_d, Y$ be $(d+1)$  $d$-tuples of vectors in $\mathbb{R}^{d}$ and let $Y=(\vec y_1, \dots, \vec y_d)$. Then,
    
    $$\begin{array}{ll}
    \Delta\Big( \boldsymbol{\alpha}(X_1), \dots, \boldsymbol{\alpha}(X_d), \boldsymbol{\alpha}(Y)\Big) & = \sign \left( \left\langle \boldsymbol{\alpha}\big( \boldsymbol{\alpha}(X_1), \dots, \boldsymbol{\alpha}(X_d)\big), \boldsymbol{\alpha}(Y)\right\rangle \right)\\
    & =\Delta\Big(\boldsymbol{\alpha}\big( \boldsymbol{\alpha}(X_1), \dots,  \boldsymbol{\alpha}(X_d),\vec y_1, \dots, \vec y_d\big)\Big).
    \end{array}$$
\end{lemma}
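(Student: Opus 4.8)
The plan is to reduce the statement to a single elementary identity relating $\boldsymbol{\alpha}$ to determinants, and then to apply that identity twice. (Here, as the construction of $\boldsymbol{\alpha}$ requires, each $X_i$ and $Y$ is a $d$-tuple of vectors in $\mathbb{R}^{d+1}$, so that $\boldsymbol{\alpha}(X_i)$ and $\boldsymbol{\alpha}(Y)$ are again vectors of $\mathbb{R}^{d+1}$.) First I would record the rank-$(d+1)$ analogue of Equation~\eqref{eq:det1} and Remark~\ref{rem:pos}: \emph{for every $d$-tuple $Z=(\vec z_1,\dots,\vec z_d)$ of vectors in $\mathbb{R}^{d+1}$ and every $\vec v\in\mathbb{R}^{d+1}$ one has $\Delta(\vec z_1,\dots,\vec z_d,\vec v)=\sign\langle \boldsymbol{\alpha}(Z),\vec v\rangle$.} If $Z$ is dependent, both sides vanish (the left because a determinant whose first $d$ columns are dependent is zero, the right because $\boldsymbol{\alpha}(Z)=\vec 0$ by definition). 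If $Z$ is independent, I would decompose $\vec v$ orthogonally as $\vec v=\vec u+t\,\boldsymbol{\alpha}(Z)$ with $\vec u\in h(Z)=\operatorname{span}(\vec z_1,\dots,\vec z_d)$ and $t=\langle\boldsymbol{\alpha}(Z),\vec v\rangle$ (this uses that $h(Z)$ is exactly the orthogonal complement of the unit vector $\boldsymbol{\alpha}(Z)$); then multilinearity gives $\det(\vec z_1,\dots,\vec z_d,\vec v)=t\,\det(\vec z_1,\dots,\vec z_d,\boldsymbol{\alpha}(Z))$, and $\det(\vec z_1,\dots,\vec z_d,\boldsymbol{\alpha}(Z))>0$ is precisely the orientation condition in the definition of $\boldsymbol{\alpha}(Z)$, so passing to signs yields the identity.

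Granting this, for the first equality of the lemma I would apply the identity with $Z=(\boldsymbol{\alpha}(X_1),\dots,\boldsymbol{\alpha}(X_d))$ and $\vec v=\boldsymbol{\alpha}(Y)$, observing that then $\boldsymbol{\alpha}(Z)=\boldsymbol{\alpha}\big(\boldsymbol{\alpha}(X_1),\dots,\boldsymbol{\alpha}(X_d)\big)$. For the second equality I would apply the identity once more with $Z=Y=(\vec y_1,\dots,\vec y_d)$ and $\vec v=\boldsymbol{\alpha}\big(\boldsymbol{\alpha}(X_1),\dots,\boldsymbol{\alpha}(X_d)\big)$ and use the symmetry of the scalar product, which gives
\[
\sign\big\langle \boldsymbol{\alpha}\big(\boldsymbol{\alpha}(X_1),\dots,\boldsymbol{\alpha}(X_d)\big),\boldsymbol{\alpha}(Y)\big\rangle=\Delta\Big(\vec y_1,\dots,\vec y_d,\ \boldsymbol{\alpha}\big(\boldsymbol{\alpha}(X_1),\dots,\boldsymbol{\alpha}(X_d)\big)\Big);
\]
reordering the arguments of the last determinant by the alternating law (CH1) then yields the asserted right-hand side, and for $d=2$ no permutation sign occurs, so one recovers exactly Equation~\eqref{eq:delta}. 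I would also note in passing that the remaining degenerate configurations (some $X_i$ dependent, or $Y$ dependent) make both sides of each equality vanish simultaneously, since $\boldsymbol{\alpha}$ then returns $\vec 0$ and every determinant or scalar product with a $\vec 0$ entry is zero.

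I do not expect any real obstacle: the lemma is, as its statement indicates, a direct consequence of the definition of $\boldsymbol{\alpha}$. The only two points that need attention are the sign bookkeeping incurred when permuting the arguments of $\Delta$ (the factor $(-1)^d$ furnished by (CH1)) and the uniform handling of the degenerate cases, so that the claimed equalities hold everywhere and not merely for generic configurations.
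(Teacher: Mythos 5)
Your argument is correct and, importantly, it supplies a proof where the paper gives none: the authors simply declare the lemma ``a straight consequence of the definition of $\boldsymbol{\alpha}$''. The identity you isolate, $\Delta(\vec z_1,\dots,\vec z_d,\vec v)=\sign\langle\boldsymbol{\alpha}(Z),\vec v\rangle$ for a $d$-tuple $Z$ of vectors of $\mathbb{R}^{d+1}$, established via the orthogonal decomposition $\vec v=\vec u+t\,\boldsymbol{\alpha}(Z)$ and the orientation condition $\det(\vec z_1,\dots,\vec z_d,\boldsymbol{\alpha}(Z))>0$, is exactly the rank-$(d+1)$ analogue of Equation~\eqref{eq:det1} that the definition of $\boldsymbol{\alpha}$ is built to yield, and your treatment of the degenerate cases is complete. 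You are also right to read the hypothesis as vectors in $\mathbb{R}^{d+1}$ rather than $\mathbb{R}^{d}$, and to repair the garbled parenthesization of the lemma's last display so that $\boldsymbol{\alpha}$ is applied to $d$ arguments.

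The one point you flag but do not push to a conclusion is the permutation sign, and it matters. Your second application of the identity gives
$$\sign\left\langle \boldsymbol{\alpha}\big(\boldsymbol{\alpha}(X_1),\dots,\boldsymbol{\alpha}(X_d)\big),\boldsymbol{\alpha}(Y)\right\rangle=\Delta\Big(\vec y_1,\dots,\vec y_d,\ \boldsymbol{\alpha}\big(\boldsymbol{\alpha}(X_1),\dots,\boldsymbol{\alpha}(X_d)\big)\Big),$$
and moving the iterated $\boldsymbol{\alpha}$ from the last to the first position in a $(d+1)$-tuple costs $(-1)^d$. Hence, with the iterated $\boldsymbol{\alpha}$ written first as in Equation~\eqref{eq:delta}, the second asserted equality holds verbatim only for even $d$ (in particular for $d=2$); for odd $d$ one must either list $\vec y_1,\dots,\vec y_d$ before the iterated $\boldsymbol{\alpha}$ or insert the factor $(-1)^d$. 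This is a defect of the statement rather than of your proof---and it is of a piece with the parity-dependent signs appearing in Theorem~\ref{thm:lin-wit} and Corollary~\ref{cor:wedd}---but your write-up should state explicitly which corrected form it proves instead of asserting that the reordering ``yields the asserted right-hand side''.
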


We define the {\em strong geometry} associated to $X$, denoted by $\sgeom_{\mathsf{Lin}}(X)$, as the structure composed by the couple $(M_{\mathsf{Lin}}(X), M_\wed(X))$. 
We may also consider the affine version $\sgeom_{\mathsf{Aff}} (X)=(M_{\mathsf{Aff}}(X), M_\wed(\inc(X)))$. Strong geometry will determine helpful complementary geometric information on the set $X$. Moreover, the chirotopes $\chi$ and $\chi_{\Lambda}$ are closely related as it is highlighted in the following two propositions.


\begin{proposition}\label{prop:wit1}
Let $P=\{p_a, p_b, p_c, p_x, p_y\}$ be points in the affine space $\rtw\subset\rth$. Let $h_1, h_2$ and $h_3$ be the planes spanned by the couples of 3-dimensional vectors 
$I_1=(\vec p_a,\vec p_b), I_2=(\vec p_a,\vec p_c)$ and $I_3=(\vec p_x,\vec p_y)$ respectively. Recall that $\boldsymbol{\alpha}(I_1)=\vec p_a\wedge \vec p_b,  \boldsymbol{\alpha}(I_2)=\vec p_a\wedge \vec p_c$ and $\boldsymbol{\alpha}(I_3)=\vec p_x\wedge \vec p_y$. Then,
\begin{equation}\label{eq:chi2}
\chi_{\Lambda}(I_1,I_2,I_3) = \chi(a, b, c) \chi(a, x, y).
\end{equation}
\end{proposition}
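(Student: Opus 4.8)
The plan is to collapse the left-hand side of \eqref{eq:chi2} to a single scalar triple product via the Lagrange (vector triple-product) identity in $\rth$, after which the asserted factorization of signs is immediate.

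First I would rewrite $\chi_\Lambda(I_1,I_2,I_3)$ using \eqref{eq:delta}: since $\boldsymbol{\alpha}(I_1)=\vec p_a\wedge\vec p_b$, $\boldsymbol{\alpha}(I_2)=\vec p_a\wedge\vec p_c$ and $\boldsymbol{\alpha}(I_3)=\vec p_x\wedge\vec p_y$, this gives
\[
\chi_\Lambda(I_1,I_2,I_3)=\Delta\big((\vec p_a\wedge\vec p_b)\wedge(\vec p_a\wedge\vec p_c),\ \vec p_x,\ \vec p_y\big).
\]
Everything then hinges on simplifying the double cross product $(\vec p_a\wedge\vec p_b)\wedge(\vec p_a\wedge\vec p_c)$. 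For this I would invoke the identity $(\vec a\wedge\vec b)\wedge(\vec c\wedge\vec d)=\det(\vec a,\vec b,\vec d)\,\vec c-\det(\vec a,\vec b,\vec c)\,\vec d$; putting $\vec a=\vec c=\vec p_a$, $\vec b=\vec p_b$ and $\vec d=\vec p_c$ kills the second term and leaves
\[
(\vec p_a\wedge\vec p_b)\wedge(\vec p_a\wedge\vec p_c)=\det(\vec p_a,\vec p_b,\vec p_c)\,\vec p_a.
\]

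Second, I would substitute this back and use that $\Delta=\sign\circ\det$ scales correctly in its first slot, i.e.\ $\Delta(\lambda\vec u,\vec v,\vec w)=\sign(\lambda)\,\Delta(\vec u,\vec v,\vec w)$ for every $\lambda\in\rere$ (the case $\lambda=0$ being the trivial identity $0=0$). This yields
\[
\chi_\Lambda(I_1,I_2,I_3)=\Delta\big(\det(\vec p_a,\vec p_b,\vec p_c)\,\vec p_a,\ \vec p_x,\ \vec p_y\big)=\sign\!\big(\det(\vec p_a,\vec p_b,\vec p_c)\big)\cdot\Delta(\vec p_a,\vec p_x,\vec p_y).
\]
Since $\chi(a,b,c)=\Delta(\vec p_a,\vec p_b,\vec p_c)$ and $\chi(a,x,y)=\Delta(\vec p_a,\vec p_x,\vec p_y)$ by definition of the chirotope induced by $P$, the right-hand side equals $\chi(a,b,c)\,\chi(a,x,y)$, which is \eqref{eq:chi2}.

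The only step needing real care is getting the triple-product identity (and the orientation convention for $\boldsymbol{\alpha}$) exactly right; there is no genuine obstacle beyond that. In particular the degenerate configurations---$a,b,c$ collinear, $a,x,y$ collinear, or even $p_a=p_b$---are handled uniformly by the computation above, since $\sign(0)=0$ and $\Delta(\vec 0,\cdot,\cdot)=0$ make both sides vanish. One should also note that the identities for $\boldsymbol{\alpha}$ (Lemma \ref{lem:delta}, equivalently \eqref{eq:det1}--\eqref{eq:delta}) hold up to positive rescaling and that $\vec p_a\wedge\vec p_b$ already points in the $\boldsymbol{\alpha}(I_1)$-direction (likewise for $I_2,I_3$), so replacing hyperplane-vectors by explicit wedges introduces no spurious sign. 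If one prefers not to quote \eqref{eq:delta}, the same computation runs directly from $\chi_\Lambda(I_1,I_2,I_3)=\sign\langle(\vec p_a\wedge\vec p_b)\wedge(\vec p_a\wedge\vec p_c),\,\vec p_x\wedge\vec p_y\rangle$ together with $\langle\vec u,\vec v\wedge\vec w\rangle=\det(\vec u,\vec v,\vec w)$.
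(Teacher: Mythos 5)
Your proof is correct, and its skeleton is the same as the paper's: both arguments reduce the claim to the single identity $(\vec p_a\wedge\vec p_b)\wedge(\vec p_a\wedge\vec p_c)=\chi(a,b,c)\,\vec p_a$ (up to positive rescaling, which $\Delta$ ignores) and then conclude via the scalar triple product. The difference lies in how that identity is justified. The paper argues geometrically: $\vec p_a$ lies on both equators $l_1$ and $l_2$, so their positive intersection must be $\vec p_a$ or $-\vec p_a$, and the sign is pinned down by the observation that there is only one isotopy class of positive triples of vectors; the degenerate case $\chi(a,b,c)=0$ is then dispatched separately. You instead invoke the Lagrange quadruple-product identity $(\vec a\wedge\vec b)\wedge(\vec c\wedge\vec d)=\det(\vec a,\vec b,\vec d)\,\vec c-\det(\vec a,\vec b,\vec c)\,\vec d$, which yields the exact scalar $\det(\vec p_a,\vec p_b,\vec p_c)$ rather than merely its sign. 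What your route buys is a single computation that covers all degenerate configurations (collinear triples, repeated points) uniformly, with no case split and no appeal to an isotopy argument; what it costs is that the identity you quote is specific to the cross product in $\rth$, whereas the paper's sign-theoretic reasoning is precisely the one that scales to the higher-rank generalization in Theorem \ref{thm:lin-wit}, where the positive intersection of hyperplanes through $\omega$ is again identified as $\pm\omega$ but no closed-form product identity is available. Your closing remarks on the orientation convention for $\boldsymbol{\alpha}$ and on positive rescaling are exactly the right points to check, and they do hold.
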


\begin{proof} If $\chi(a, b, c)=0$ then the equality \eqref{eq:chi2} can be easily checked. Let us thus suppose that $\chi(a, b, c)\neq 0$, that is, $p_a, p_b$ and $p_c$ are points in generic position.
Since $\vec p_a$ is in both $l_1=\vec p_a\wedge \vec p_b$ and $l_2=\vec p_a\wedge \vec p_c$, then the positive intersection of equators $l_1$ and $l_2$ is either $p_a$ if $\chi(a,b,c) = +1$ or  $-p_a$ if $\chi(a,b,c) = -1$ (if $\chi(a,b,c) = 0$ then $\vec p_a\wedge \vec p_b$ and $\vec p_a\wedge \vec p_c$ are colinear). This is due to the fact there is only one isotopy class of positive triples of vectors. 
\smallskip

We have that \eqref{eq:chi2} can be thus stated as

$$(\vec p_a \wedge \vec p_b) \wedge (\vec p_a \wedge \vec p_c) = \chi(a, b,c) \ \vec p_a.$$

Combining this equality with 
 $$\chi_{\Lambda}(I_1, I_2, I_3) = \sign \Big(\langle (\vec p_a \wedge \vec p_b) \wedge (\vec p_a \wedge \vec p_c), (\vec p_x \wedge \vec p_y) \rangle \Big)$$
 the desired equality follows, see Figure \ref{fig:chi2}.
\end{proof}

 \begin{figure}[H]
 \centering
\includegraphics[width=0.5\textwidth]{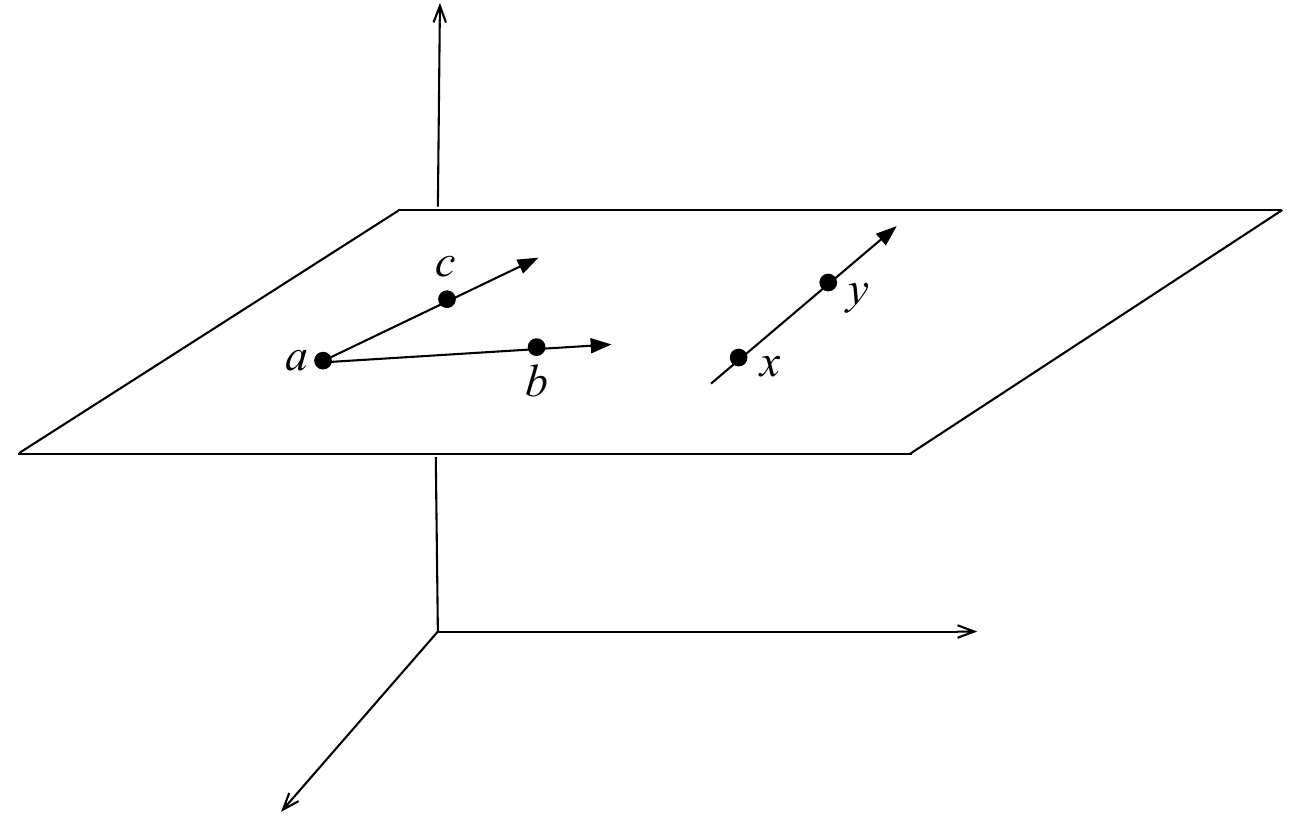}
    \caption{A visual explanation of Proposition \ref{prop:wit1} in the affine setting.} \label{fig:chi2}
\end{figure}

A consequence of Proposition \ref{prop:wit1} is the following

\begin{corollary}\label{cor:chir} Let $X$ and $X'$ be two $n$-tuples in $\mathbb{R}^3$. If $M_\wed(X)$
is isomorphic to $M_\wed(X')$ then the chirotopes of $M_{\mathsf{Lin}}(X)$  and $M_{\mathsf{Lin}}(X')$ are either equal or opposite. 
\end{corollary}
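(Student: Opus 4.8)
The plan is to reconstruct the linear chirotope $\chi$ of $X$ from the wedge matroid $M_\wed(X)$ up to a global sign, using Proposition \ref{prop:wit1} as the key tool. The central observation is that the identity \eqref{eq:chi2}, namely $\chi_\Lambda(I_1,I_2,I_3)=\chi(a,b,c)\chi(a,x,y)$ with $I_1=(\vec p_a,\vec p_b)$, $I_2=(\vec p_a,\vec p_c)$, $I_3=(\vec p_x,\vec p_y)$, lets us read off \emph{products} of values of $\chi$ from $\chi_\Lambda$. So first I would fix a reference basis: pick four points, say indices $0,1,2,3$, with $\chi(0,1,2)\ne 0$ (such a generic triple exists since $M_{\mathsf{Lin}}(X)$ has rank $3$), and declare the global sign by setting $\varepsilon\chi(0,1,2)=+1$, where $\varepsilon\in\{\pm1\}$ is the undetermined overall sign we allow in the statement. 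Everything below is then computed as $\varepsilon\chi(\cdot,\cdot,\cdot)$, i.e.\ $\chi$ up to the single sign $\varepsilon$.

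Next I would explain how to recover $\varepsilon\chi(a,b,c)$ for an arbitrary triple $a,b,c$. Using \eqref{eq:chi2} with the pair $(\vec p_a,\vec p_b)$ played against itself in the first two slots and against a third hyperplane-vector, or more robustly by chaining: for any two triples sharing a point, $\chi_\Lambda\big((\vec p_a,\vec p_b),(\vec p_a,\vec p_c),(\vec p_x,\vec p_y)\big)=\chi(a,b,c)\,\chi(a,x,y)$, so the product of $\chi$-values on the two triples $(a,b,c)$ and $(a,x,y)$ sharing the common point $a$ is recovered from the abstract wedge matroid, \emph{provided} we can identify inside $M_\wed(X)$ which elements of the ground set $[n]^2$ correspond to pairs sharing a point. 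This identification is the crux. The plan is: two hyperplane-vectors $\boldsymbol\alpha(\vec p_a,\vec p_b)$ and $\boldsymbol\alpha(\vec p_a,\vec p_c)$ arising from pairs with a common point have positive intersection $\pm\vec p_a$ lying on a common $1$-flat of the realizing configuration; more combinatorially, the collinearity/rank-$2$ relations among the $\boldsymbol\alpha$'s are encoded by $\chi_\Lambda$ (through which triples $\chi_\Lambda(I,J,K)=0$ for all $K$), and these relations detect when three equators pass through a common point, hence when the underlying point pairs form a "pencil." Once the pencils-through-a-point are identified as a combinatorial feature of $M_\wed(X)$, each ground-set element of $M_\wed$ can be labelled by an unordered pair of points of $X$ (this labelling is exactly the isomorphism data), and the products $\chi(a,b,c)\chi(a,x,y)$ over all triples through each point $a$ determine all values $\varepsilon\chi(a,b,c)$ once one value through $a$ is fixed; consistency across different common points propagates the single global sign $\varepsilon$ throughout, using that the point set is connected via generic triples.

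Concretely, the argument for the corollary runs: given an isomorphism $\varphi\colon M_\wed(X)\to M_\wed(X')$, it carries ground-set elements to ground-set elements preserving $\chi_\Lambda$ up to a global sign (isomorphism of oriented matroids allows reorientation/relabelling, and for \emph{uniform} rank-$3$ wedge matroids on $[n]^2$ the reorientation is forced to be trivial or total by the pencil structure — here one must be slightly careful and may only get $\chi_\Lambda$ preserved up to sign). Then I transport the pencil structure, obtain a bijection on the underlying point indices $[n]\to[n]$ (each point $=$ the common intersection of its pencil of equators), and apply \eqref{eq:chi2}: for all triples $a,b,c$ and $a,x,y$ through a common point, $\chi(a,b,c)\chi(a,x,y)=\chi_\Lambda(\ldots)=\pm\chi_\Lambda(\varphi(\ldots))=\pm\chi'(\sigma a,\sigma b,\sigma c)\chi'(\sigma a,\sigma x,\sigma y)$. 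Fixing one reference triple to pin the sign, an induction over the connected "triple graph" yields $\chi(a,b,c)=\pm\chi'(\sigma a,\sigma b,\sigma c)$ with a uniform sign, i.e.\ $M_{\mathsf{Lin}}(X)$ and $M_{\mathsf{Lin}}(X')$ are equal or opposite. The main obstacle I anticipate is precisely the reconstruction of the point-pair labelling of the ground set $[n]^2$ of $M_\wed$ purely from the abstract oriented matroid $M_\wed(X)$ — detecting the "pencils through a point" as an intrinsic combinatorial invariant and ruling out exotic reorientations that would break \eqref{eq:chi2}; everything else is bookkeeping with the product formula and a connectivity/induction argument to globalize the sign.
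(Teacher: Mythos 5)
Your proposal is correct and follows essentially the same route as the paper: both use Proposition \ref{prop:wit1} to read off from the wedge chirotope the vanishing triples (via $(x,y)=(b,c)$, which gives $\chi(a,b,c)^2$) and the products $\chi(a,b,c)\chi(a,x,y)$ for bases sharing an element, and then propagate a single global sign to all bases by a connectivity/basis-exchange argument. The only divergence is that the paper's proof simply assumes the two configurations have the same wedge chirotope under the canonical pair-labelling of the ground set, whereas you flag (without fully resolving) the problem of recovering that labelling from an abstract isomorphism --- a point the paper's proof does not address either.
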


  \begin{proof} Suppose that $X,X'$ have the same wedge chirotope.
 By applying Proposition \ref{prop:wit1} to $(x,y) = (b,c)$, we obtain that $X$ and $X'$ agree on triples having sign equal to $0$.
 
 Now, again according to Proposition \ref{prop:wit1}, $X$ and $X'$ agree on whether two bases sharing some element have equal or opposite orientations. This implies that they also agree for any pair of bases $(x_{i_1},x_{i_2},x_{i_3})$ and $(x_{i_4},x_{i_5},x_{i_6})$, since, according to the matroid basis exchange property,
we can pass from $(x_{i_1},x_{i_2},x_{i_3})$ to $(x_{i_4},x_{i_5},x_{i_6})$ through$(x_{i_1}, x_{i_2}, x_{k})$ for some $ k \in \{i_4,i_5, i_6\}$.  This shows that the chirotopes are either equal or opposite.
      \end{proof}

We notice that, conversely, two opposite configurations have the same wedge chirotope.
 
We shall extend Proposition \ref{prop:wit1} to affine space $\mathbb{R}^3$.  To this end, we introduce the {\em witness chirotope} but, before doing so, let us first briefly discuss our motivation behind this new structure.
\smallskip

The main issue is that, instead of studying knots through their diagrams (orthogonal projections in the plane), we rather consider a projection of the knot radially to a sphere, say with center $\omega$. Therefore, given $x_1, \dots, x_n \in \mathbb{R}^3$, we are interested in the linear geometry of the  $x_1 - \omega, \dots, x_n -\omega \in \mathbb{R}^3$ (notice that the rank decreases by one). In this setting,
 the most relevant piece of information in the  knot sphere diagram will be given by  the strong geometry of such configuration (that is, a strong geometry witnessed by $\omega$). As we will see in Proposition \ref{prop:lin-wit}, the latter is induced by the strong geometry of the initial configuration since affine oriented hyperplanes $ \alpha(\vec \omega, \vec x_1, \vec y_1), \alpha(\vec \omega, \vec x_2, \vec y_2), \alpha(\vec \omega, \vec x_3, \vec y_3)$
meet either positively or negatively at $\omega$ depending on whether $\vec \omega$ sees the oriented lines $ (\vec x_1, \vec y_1), (\vec x_2, \vec y_2), (\vec x_3, \vec y_3)$ as a positive or negative triple. Notice that Proposition \ref{prop:wit1} is the rewording of the preceding remark in lower rank, although the strong geometry is hidden by the fact that, in rank $2$, it is the same as the usual chirotope.\\

 The above discussion naturally leads us to define the {\em witness} chirotope as follows. Let $\omega, x_{1},\dots ,x_{n}$ be a configuration of points in affine space $\mathbb{R}^d$, we note $\chi_{\omega}$ the linear chirotope of the vector configuration $(x_i - \omega)_{i \in [n]}$. Of course, since 
$$
\left| \begin{matrix}
    1& 1& \dots& 1\\
    \omega & x_{i_1} & \dots & x_{i_d}
\end{matrix}\right| = \left| \begin{matrix}
1 & 0 & \dots & 0\\
  \omega & x_{i_1}-\omega & \dots & x_{i_n}-\omega   
\end{matrix} \right| = | x_{i_1} - \omega, \dots, x_{i_n} - \omega|,
$$

then $\chi_{\omega}(\cdot, \dots, \cdot) = \chi(\omega, \cdot, \dots, \cdot)$, see figure \ref{fig:witn}.

 \begin{figure}[H]
 \centering
\includegraphics[width=0.8\textwidth]{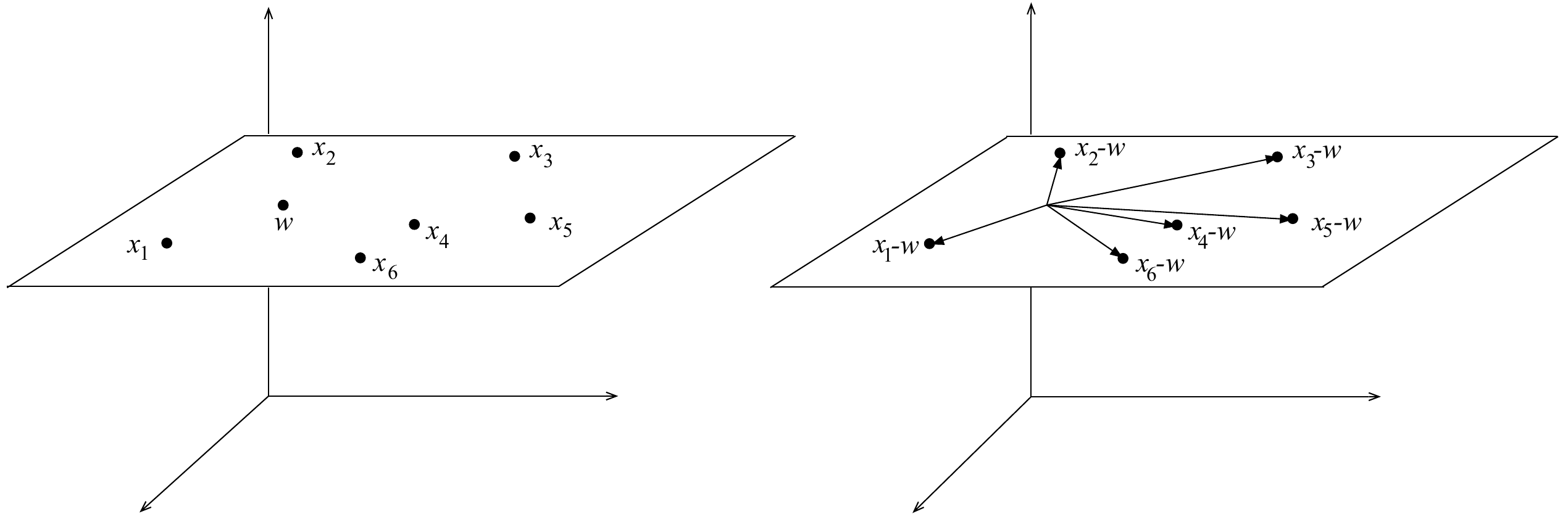}
    \caption{(Left) Affine oriented matroid of rank 3 associated to points $\omega, x_{1},\dots ,x_{6}$. (Right) Witness oriented matroid of rank 2 associated to vectors $\vec x_{1}-\boldsymbol{\omega},\dots ,\vec x_{6}-\boldsymbol{\omega}$.} \label{fig:witn}
\end{figure}

Since we want to study `witnessed strong geometry', it is also natural to define $\chi_{\omega, \wedge}$ on $(x_i) \in (\mathbb{R}^d)^n$ as the rank $d$, linear wedge chirotope of $(x_i - \omega)_{i \in  [n]}$ .\\

In order to present clearer and more proper proofs, it is convenient to have a second approach for the definition of a `witness chirotope', this time starting and ending in a linear space and, of course, coinciding with the above definition.\\

Since $x \mapsto x - \omega$ simply acts on $\{1 \} \times \mathbb{R}^d$ as the projection on $\{0 \} \times \mathbb{R}^d$ with kernel $<  \omega >$, the most obvious replacement is the orthogonal projection $\pi_{\omega^{\perp} }$ on $ \omega ^{\perp}$. In order to be consistent with the previous definition, we orient $\omega^{\perp}$ such that positive bases of $ \omega ^{\perp}$ are the $(b_1, \dots, b_{d-1})$ such that $(\omega, b_1, \dots, b_{d-1})$ is a positive basis of $\mathbb{R}^d$. We thus have that the linear equivalent of the above definition can be obtained by setting the witnessed strong geometry of $X$ witnessed by $\omega$ as the strong geometry of $\pi_{\omega^{\perp}}(X)$ (in $ \omega^{\perp} \simeq \mathbb{R}^{d-1}$).\\

Although both definitions seem to be the most natural ones, it is not immediate to convince one-self that they coincide. Indeed, if all the $\omega, x_i$ live in $\{1\} \times \mathbb{R}^3$, then the map $x \mapsto x-\omega$ acts on the $x_i$ as the projection on $\{0\} \times \mathbb{R}^3$ with kernel $\omega$, which is not necessarily the orthogonal projection (unless $\omega = e_1$ of course)\\

Let us check that both projections of the point configurations do have the same strong geometry.

\begin{lemma}\label{lem:equiv}
    Let $H_1, H_2$ be two hyperplanes of $\mathbb{R}^d$ not containing $\omega \neq 0$, and let $\pi_1, \pi_2$  the projections with kernel $< \omega>$ and respective images $H_1, H_2$. Suppose further that the $H_i$ are oriented such that a positive base $(b_1, \dots, b_{d-1})$ of $H_i$ induces a positive base $(\omega, b_1, \dots, b_{d-1})$ of $\mathbb{R}^d$. Then for any $X \in (\mathbb{R}^d)^n$, $\pi_1(X)$ and $\pi_2(X)$ have the same strong geometry.
\end{lemma}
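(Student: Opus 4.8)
The plan is to reduce the statement to the fact that the two projections $\pi_1, \pi_2$ differ by a linear automorphism of $\mathbb{R}^d$ that is the identity on the relevant data, hence induces an isomorphism of strong geometries. Concretely, fix a point configuration $X=(\vec x_1,\dots,\vec x_n)$ and consider the map $\phi = \pi_2 \circ (\pi_1|_{H_1})^{-1} \colon H_1 \to H_2$. Since both $\pi_1$ and $\pi_2$ have the same kernel $\langle \omega\rangle$, for any $\vec v \in \mathbb{R}^d$ we have $\pi_1(\vec v) - \pi_2(\vec v) \in \langle\omega\rangle$, so $\phi$ is exactly the restriction to $H_1$ of the linear map $\mathbb{R}^d \to \mathbb{R}^d$ that is the identity modulo $\langle\omega\rangle$; equivalently $\phi(\vec w) = \vec w + \lambda(\vec w)\,\omega$ for a linear form $\lambda$ on $H_1$. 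Then $\pi_2(X) = \phi(\pi_1(X))$ as tuples of vectors, so it suffices to show that applying $\phi$ to the configuration $\pi_1(X)$ in $H_1$ preserves both $M_{\mathsf{Lin}}$ and $M_\wed$, where $H_1 \simeq \mathbb{R}^{d-1}$ carries the orientation specified in the hypothesis (and $H_2$ the analogous one).

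First I would check that $\phi$ is orientation-preserving as a map $H_1 \to H_2$. This is where the chosen orientations of the $H_i$ enter: if $(b_1,\dots,b_{d-1})$ is a positive basis of $H_1$, then $(\omega, b_1,\dots,b_{d-1})$ is positive in $\mathbb{R}^d$; applying the automorphism $\Psi\colon \vec v \mapsto \vec v + \lambda(\text{proj}_{H_1}\vec v)\,\omega$ (any linear extension of $\phi$ fixing $\omega$) to this basis changes each $b_i$ to $\phi(b_i)$ and fixes $\omega$, and since $\Psi$ is unipotent (it fixes $\omega$ and acts as identity on $\mathbb{R}^d/\langle\omega\rangle$) it has determinant $1$, so $(\omega, \phi(b_1),\dots,\phi(b_{d-1}))$ is still positive in $\mathbb{R}^d$, hence $(\phi(b_1),\dots,\phi(b_{d-1}))$ is positive in $H_2$. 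Thus $\phi$ is a positively-oriented linear isomorphism.

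Next, $M_{\mathsf{Lin}}(\pi_1(X)) \cong M_{\mathsf{Lin}}(\pi_2(X))$ is immediate: for any positively oriented linear isomorphism $\phi$ and any tuple of vectors, $\det(\phi(\vec u_{i_1}),\dots,\phi(\vec u_{i_{d-1}})) = \det(\phi)\cdot \det(\vec u_{i_1},\dots,\vec u_{i_{d-1}})$ up to the identification of bases of $H_1$ and $H_2$, so the chirotopes agree on the nose (same index set $[n]$, same sign function). For $M_\wed$ I would use the hyperplane-vector description: in $H_i \simeq \mathbb{R}^{d-1}$, the hyperplane-vector $\boldsymbol\alpha(X_I)$ of a $(d-2)$-tuple $X_I$ is determined, up to positive scalar, by being orthogonal to the span of $X_I$ with the correct determinant sign. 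Under a positively oriented isomorphism $\phi$, the span of $\phi(X_I)$ is $\phi(\text{span}(X_I))$, and one checks that $\boldsymbol\alpha(\phi(X_I))$ is a positive multiple of $(\phi^{-1})^{\mathsf{T}}\boldsymbol\alpha(X_I)$ (the natural action on normals), the positivity of the scalar again coming from $\det\phi>0$. Since $(\phi^{-1})^{\mathsf{T}}$ is itself a linear isomorphism with determinant of the same sign as $\det\phi$, hence positive, applying $M_{\mathsf{Lin}}$ to the transformed family $\Lambda(\phi(X)) = \big((\phi^{-1})^{\mathsf{T}}\boldsymbol\alpha(X_I)\big)_I$ (up to positive scalars) gives the same chirotope as $\Lambda(\pi_1(X))$. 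Hence $M_\wed(\pi_1(X)) \cong M_\wed(\pi_2(X))$, and the two strong geometries coincide.

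The main obstacle I anticipate is purely bookkeeping: being careful that all the scalars by which the various $\boldsymbol\alpha$'s get rescaled under $\phi$ and $(\phi^{-1})^{\mathsf T}$ are genuinely positive — this is exactly what forces the precise orientation convention on $H_1,H_2$ in the hypothesis, and it is the one place where one must not be cavalier. Once the positivity of $\det\phi$ (equivalently, of $\det(\phi^{-1})^{\mathsf T}$) is nailed down via the unipotence of $\Psi$, the rest is the observation that both $M_{\mathsf{Lin}}$ and $M_\wed$ are invariant under positively oriented linear isomorphisms applied to a vector configuration, together with the identity $\pi_2(X) = \phi(\pi_1(X))$.
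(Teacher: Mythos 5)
Your proof is correct, and its overall skeleton matches the paper's: both arguments first observe that $\pi_1(X)$ and $\pi_2(X)$ differ by an orientation-preserving isomorphism $H_1\to H_2$ (your unipotence argument for $\Psi$ is the same computation as the paper's $\left|\omega, b_1-\lambda_1\omega,\dots\right| = \left|\omega, b_1,\dots\right|$), and both then reduce to showing that a positive isomorphism preserves the strong geometry. Where you genuinely diverge is in how you prove that reduction for the wedge part. The paper argues softly: it characterizes the vanishing locus of $\Delta\bigl(\alpha(\cdot),\dots,\alpha(\cdot)\bigr)$ intrinsically (as the non-triviality of $\bigcap_i \langle x_{i,1},\dots,x_{i,d-2}\rangle$, hence invariant under any isomorphism), and then uses path-connectedness of the group of positive isomorphisms plus an intermediate-value argument to rule out a sign flip. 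You instead compute the transformation law of hyperplane-vectors directly: $\boldsymbol{\alpha}(\phi(X_I))$ is a positive multiple of $(\phi^{-1})^{\mathsf{T}}\boldsymbol{\alpha}(X_I)$ when $\det\phi>0$ (this checks out, since the unnormalized version transforms by $\det(\phi)\,(\phi^{-1})^{\mathsf{T}}$), after which invariance of the wedge chirotope follows from $\det\bigl((\phi^{-1})^{\mathsf{T}}\bigr)>0$ and the already-established $M_{\mathsf{Lin}}$ invariance. Your route is more explicit and quantitative --- it identifies exactly how the adjoint configuration moves under $\phi$, which is reusable information --- at the cost of the transpose/normalization bookkeeping you flag; the paper's route avoids any computation with normals but needs the topological input of connectedness of $GL^+$. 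Both are complete; just make sure to note the degenerate case ($X_I$ dependent, $\boldsymbol{\alpha}(X_I)=\vec{0}$), which is preserved by any isomorphism and so causes no trouble.
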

\begin{proof}
    We first notice that the $\pi_{i}(X)$ only differ by a positive isomorphism. Indeed the composition $H_1 \hookrightarrow \mathbb{R}^d \rightarrow H_2$ of the canonical inclusion and the projection $\pi_2$ is a linear map between vector spaces of same dimensions, with trivial kernel, that is, an isomorphism. It also sends a basis $(b_1, \dots, b_{d-1})$ to some $(b_1 - \lambda_1 \omega, \dots, b_{d-1} - \lambda_{d-1} \omega)$, and since $\left| \omega, b_1 - \lambda_1 \omega, \dots, b_{d-1} - \lambda_{d-1} \omega \right| = \left| \omega, b_1, \dots, b_{d-1} \right|$, it is a positive isomorphism. We claim that a positive isomorphism preserve the strong geometry. \\
    
 Firstly, notice that for a family $(h_i)_{i\in I}$ of hyperplanes and $(v_i)_{i\in I}$ its family of normal vectors,  a vector $v$ is orthogonal to all $v_i$ if and only if it belongs to all $h_i$. But a $((d-1) \times (d-2) )$-tuple $(x_{i,j})$ verifies $\Delta\Big(\alpha (x_{1,1}, \dots,x_{1,d-2}), \dots, \alpha(x_{d-1, 1} ,\dots,  x_{d-1, d-2})\Big) = 0$  if and only if the $\alpha(x_{i, 1}, \dots, x_{i,d-2})$ are linearly dependent, or equivalently, if there is a non-zero vector orthogonal to all of them. Therefore, $\Delta\Big(\alpha (x_{1,1}, \dots,x_{1,d-2}), \dots, \alpha(x_{d-1, 1} , \dots, x_{d-1, d-2})\Big) = 0$ if and only if $\mbox{dim} \left( \bigcap_{i} < x_{i,1}, \dots, x_{i,d-2}>\right) \neq 0$. Now acting on the $x_{i,j}$ by isomorphism does not change this dimension, therefore
    $$\Delta\Big(\alpha (x_{1,1}, \dots,x_{1,d-2}), \dots, \alpha(x_{d-1, 1} ,\dots, x_{d-1, d-2})\Big) = 0$$ if and only if 
    $$\Delta\Big(\alpha (\phi(x_{1,1}), \dots,\phi(x_{1,d-2})), \dots, \alpha(\phi(x_{d-1, 1}) , \dots,\phi(x_{d-1, d-2}))\Big) = 0$$ for any isomorphism $\phi$. \\

    Take now a positive isomorphism $\phi$. Since the group of positive isomorphisms is path connected, we have a continuous path $\phi_t$ of (positive) isomorphisms such that $\phi_0 = \mbox{id}$, $\phi_1 = \phi$. If we had, say
    $$\Delta\Big(\alpha (x_{1,1}, \dots,x_{1,d-2}), \dots, \alpha(x_{d-1, 1} ,\dots, x_{d-1, d-2})\Big) >  0$$ and 
    $$\Delta\Big(\alpha (\phi(x_{1,1}), \dots,\phi(x_{1,d-2})), \dots, \alpha(\phi(x_{d-1, 1}) , \dots, \phi(x_{d-1, d-2}))\Big) < 0$$ on some $(x_{i,j})$, then there would be some $t \in (0,1)$ such that 
    $$\Delta\Big(\alpha (\phi_t(x_{1,1}), \dots,\phi_t(x_{1,d-2})), \dots, \alpha(\phi_t(x_{d-1, 1}) , \dots, \phi_t(x_{d-1, d-2}))\Big) < 0,$$ but, we just saw that this cannot happen. Hence, 
   {\small $$\Delta\Big(\alpha (x_{1,1}, \dots,x_{1,d-2}), \dots, \alpha(x_{d-1, 1} ,\dots, x_{d-1, d-2})\Big) = \Delta\Big(\alpha (\phi(x_{1,1}), \dots,\phi(x_{1,d-2})), \dots, \alpha(\phi(x_{d-1, 1}) , \dots,\phi(x_{d-1, d-2}))\Big).$$} 
 Implying that positive isomorphisms preserve the strong geometry, as desired.
\end{proof}

We have thus checked that the natural definition of witnessed strong geometry from affine space to linear space coincides with the definition from linear space to linear space.
A rigorous formulation of the latter is given by the following generalized version of Proposition \ref{prop:wit1}.



\begin{theorem}\label{thm:lin-wit}
    Let $\omega, x_{1,1}, \dots, x_{1, d-2}, \dots, x_{d-1, d-2}, y_1, \dots, y_{d-1} \in \mathbb{R}^d$, with $\omega \neq 0$. Let us denote by $\tilde{\alpha}$ and $ \tilde{\Delta}$ the $\alpha$ and $\Delta$ functions associated with the oriented hyperplane $\omega^{\perp}$ . Then, we have that

    \begin{align*}
&\Delta\Big( \alpha(\omega, x_{1,1}, \dots, x_{1,d-2}), \dots, \alpha(\omega, x_{d-1, 1}, \dots, x_{d-1, d-2}), \alpha(y_1, \dots, y_{d-1}) \Big) \\
= (-1)^{d+1} &\tilde{\Delta} \Big(\tilde{\alpha}(x_{1,1}, \dots, x_{1, d-2}), \dots, \tilde{\alpha}(x_{d-1, 1}, \dots, x_{d-1, d-2}) \Big) \Delta(\omega, y_1, \dots, y_{d-1})
    \end{align*}

\end{theorem}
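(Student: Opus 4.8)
The plan is to reduce the $d$-dimensional identity to a computation about a single determinant by unwinding the definition of $\alpha$ on both sides and comparing orientations. Write $\vec v_i = \alpha(\omega, x_{i,1}, \dots, x_{i,d-2}) \in \mathbb{R}^d$ for $i = 1, \dots, d-1$ and $\vec w = \alpha(y_1, \dots, y_{d-1})$. By Lemma~\ref{lem:delta} (applied in rank $d$, with the roles of the tuples played by the $(\omega, x_{i,\cdot})$ and by $(y_1, \dots, y_{d-1})$), the left-hand side equals $\Delta\big(\alpha(\vec v_1, \dots, \vec v_{d-1}, y_1, \dots, y_{d-1})\big)$, i.e. the sign of $\langle \alpha(\vec v_1, \dots, \vec v_{d-1}), \vec w\rangle$. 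So it suffices to understand the vector $\alpha(\vec v_1, \dots, \vec v_{d-1})$: each $\vec v_i$ is orthogonal to $\omega$ and to $x_{i,1}, \dots, x_{i,d-2}$, hence $\vec v_i \in \omega^{\perp}$ and in fact $\vec v_i$ is (up to positive scalar) the hyperplane-vector inside $\omega^{\perp}$ of the family $\pi_{\omega^\perp}(x_{i,1}), \dots, \pi_{\omega^\perp}(x_{i,d-2})$ — this is exactly the "$\tilde\alpha$" of the theorem, once one checks the sign convention. The key geometric point is that $\alpha(\vec v_1, \dots, \vec v_{d-1})$, being orthogonal to $d-1$ vectors all lying in the hyperplane $\omega^\perp$, must be a scalar multiple of $\omega$; so $\langle \alpha(\vec v_1, \dots, \vec v_{d-1}), \vec w\rangle$ has the same sign as $(\text{that scalar})\cdot \langle \omega, \vec w\rangle$, and $\langle \omega, \vec w\rangle = \langle \omega, \alpha(y_1,\dots,y_{d-1})\rangle$ has sign $\Delta(\omega, y_1, \dots, y_{d-1})$ again by Lemma~\ref{lem:delta} (in rank $d$). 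It then remains to identify "that scalar": its sign is precisely $\tilde\Delta\big(\tilde\alpha(x_{1,\cdot}), \dots, \tilde\alpha(x_{d-1,\cdot})\big)$ up to the universal constant $(-1)^{d+1}$ coming from reordering $\omega$ past the $\vec v_i$'s.

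Concretely, I would make the orientation bookkeeping explicit via two determinant manipulations. First, from $\vec v_i = \alpha(\omega, x_{i,1}, \dots, x_{i,d-2})$ one has, by definition of $\alpha$, that $\det(\omega, x_{i,1}, \dots, x_{i,d-2}, \vec v_i) > 0$ whenever the family is independent. Working inside $\omega^\perp$ with the induced orientation (positive bases $(b_1, \dots, b_{d-1})$ of $\omega^\perp$ are those with $(\omega, b_1, \dots, b_{d-1})$ positive in $\mathbb{R}^d$), and using that $\det(\omega, u_1, \dots, u_{d-1})$ for $u_j \in \omega^\perp$ equals $\|\omega\|$ times the determinant of $(u_1, \dots, u_{d-1})$ in $\omega^\perp$, one gets that $\vec v_i$ is a positive multiple of $\tilde\alpha(\pi_{\omega^\perp}(x_{i,1}), \dots, \pi_{\omega^\perp}(x_{i,d-2}))$; here I also invoke that $\alpha$ only depends on the family up to the span, and that $x_{i,j}$ and $\pi_{\omega^\perp}(x_{i,j})$ differ by a multiple of $\omega$, so adding $\omega$ to the list kills the difference (this is the same cancellation used in Lemma~\ref{lem:equiv}). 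Second, $\alpha(\vec v_1, \dots, \vec v_{d-1}) = c\,\omega$ with $c$ determined by $\det(\vec v_1, \dots, \vec v_{d-1}, c\omega) > 0$, i.e. $c$ has the sign of $\det(\vec v_1, \dots, \vec v_{d-1}, \omega) = (-1)^{d-1}\det(\omega, \vec v_1, \dots, \vec v_{d-1})$, and $\det(\omega, \vec v_1, \dots, \vec v_{d-1})$ has the sign of $\tilde\Delta(\tilde\alpha(x_{1,\cdot}), \dots, \tilde\alpha(x_{d-1,\cdot}))$ by the orientation convention on $\omega^\perp$ together with the positive-scalar identification from the first step. Since $(-1)^{d-1} = (-1)^{d+1}$, assembling $\text{sign}(c)\cdot \Delta(\omega, y_1, \dots, y_{d-1})$ gives the claimed formula.

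Finally I would dispose of the degenerate cases separately: if $(y_1, \dots, y_{d-1})$ together with $\omega$ is dependent, or if any family $(\omega, x_{i,1}, \dots, x_{i,d-2})$ is dependent, or if the $\vec v_i$ are dependent, then one side is $0$; in each case one checks the other side is $0$ too — e.g. if $(\omega, x_{i,1}, \dots, x_{i,d-2})$ is dependent then $\vec v_i = \vec 0$ so the left side vanishes, while $\pi_{\omega^\perp}(x_{i,1}), \dots, \pi_{\omega^\perp}(x_{i,d-2})$ span at most a $(d-3)$-dimensional subspace of the $(d-1)$-dimensional $\omega^\perp$, so $\tilde\alpha$ of that family is $\vec 0$ and the right side vanishes as well; the argument of Lemma~\ref{lem:equiv} (that the relevant determinant vanishes iff an intersection of spans is nontrivial) handles the $\vec v_i$-dependence case uniformly. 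The main obstacle I anticipate is purely the sign/orientation bookkeeping — keeping straight the induced orientation on $\omega^\perp$, the position of $\omega$ in the various determinant lists, and verifying that the scalars introduced when passing between $x_{i,j}$ and $\pi_{\omega^\perp}(x_{i,j})$ and between $\vec v_i$ and $\tilde\alpha(\dots)$ are genuinely positive — rather than any conceptual difficulty; the $(-1)^{d+1}$ is exactly the residue of moving $\omega$ from the front of a length-$d$ list to the back.
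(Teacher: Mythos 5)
Your argument is correct and follows essentially the same route as the paper's proof: both reduce the left-hand side to the sign of $\langle \alpha(v_1,\dots,v_{d-1}), \alpha(y_1,\dots,y_{d-1})\rangle$ with $v_i=\alpha(\omega,x_{i,1},\dots,x_{i,d-2})$, observe that $\alpha(v_1,\dots,v_{d-1})$ is a scalar multiple of $\omega$ whose sign is $\Delta(v_1,\dots,v_{d-1},\omega)=(-1)^{d+1}\Delta(\omega,v_1,\dots,v_{d-1})$, identify $v_i$ with $\tilde{\alpha}$ of the orthogonal projections of the $x_{i,j}$ via the chosen orientation of $\omega^{\perp}$, and then pair against $\Delta(\omega,y_1,\dots,y_{d-1})$. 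You are somewhat more explicit than the paper about the degenerate cases and about verifying that $v_i$ is a \emph{positive} multiple of $\tilde{\alpha}(\pi_{\omega^{\perp}}(x_{i,1}),\dots,\pi_{\omega^{\perp}}(x_{i,d-2}))$, but the substance and the sign bookkeeping are the same.
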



\begin{proof}
    
  The proof runs similarly as  the one for Proposition \ref{prop:wit1}. The point is that the left hand term is the relative position between the hyperplane represented by $\alpha(y_1, \dots, y_{d-1}) $ and the positive intersection $\alpha(v_1, \dots, v_{d-1})$ of the hyperplanes represented by the $v_i = \alpha(\omega, x_{i, 1}, \dots, x_{i,d-2})$. The proof consists in showing that the latter is precisely $$(-1)^{d+1} \tilde{\Delta} \Big(\tilde{\alpha}(x_{1,1}, \dots, x_{1, d-2}), \dots, \tilde{\alpha}(x_{d-1, 1}, \dots, x_{d-1, d-2}) \Big) \omega.$$

  Since the hyperplanes represented by the $\alpha(\omega, x_{i, 1}, \dots, x_{i,d-2})$ all contain $\omega$, their positive intersection $\alpha(v_1, \dots, v_{d-1})$ is some $\eta \omega$, with $\eta \in \{-1, +1\}$ (unless it is trivial, in which case the proof is straightforward). By definition, 
  $$
 \Delta\Big( \alpha(\omega, x_{1,1}, \dots, x_{1,d-2}), \dots, \alpha(\omega, x_{d-1, 1}, \dots, x_{d-1, d-2}), \eta \omega\Big) =1
  $$

  so we simply want to compute

  $$
\eta = \Delta\Big( \alpha(\omega, x_{1,1}, \dots, x_{1,d-2}), \dots, \alpha(\omega, x_{d-1, 1}, \dots, x_{d-1, d-2}), \omega\Big) .
  $$

    Note that $\alpha(\omega, x_{i, 1}, \dots, x_{i,d-2}) = \tilde{\alpha}(p(x_{i,1}), \dots, p(x_{i, d-2}))$ where $p$ denotes the orthogonal projection on $\omega^{\perp}$ (because a vector orthogonal to $\omega$ is orthogonal to some $x$ if and only if it is orthogonal to $p(x)$, and because the choice of convention for the orientation of $\omega^{\perp}$ was made to have no "$-$" sign appear here).\\

    Therefore, we have 

    \begin{align*}
\eta &= \Delta\Big( \tilde{\alpha}(p( x_{1,1}), \dots, p(x_{1,d-2})), \dots, \tilde{\alpha}(p( x_{d-1, 1}), \dots, p(x_{d-1, d-2})), \omega\Big) \\
&= (-1)^{d+1} \Delta\Big( \omega, \tilde{\alpha}(p( x_{1,1}), \dots, p(x_{1,d-2})), \dots, \tilde{\alpha}(p( x_{d-1, 1}), \dots, p(x_{d-1, d-2}))\Big)\\
&= (-1)^{d+1} \tilde{\Delta} \Big(\tilde{\alpha}(x_{1,1}, \dots, x_{1, d-2}), \dots, \tilde{\alpha}(x_{d-1, 1}, \dots, x_{d-1, d-2}) \Big)
    \end{align*}

the last equality is by definition of $\tilde{\Delta}$. The desired equality follows.
\end{proof}

Let us rephrase Theorem \ref{thm:lin-wit} in terms of abstract chirotope. We restrict ourselves to the rank $4$ case and state it in terms of projection from affine space (handy for the knot's application).

\begin{proposition}\label{prop:wit2}  Let $n\ge 1$ be an integer and let $X=(\omega = x_0,x_1,\dots ,x_n)$ be a tuple of points in $\mathbb{R}^3$.

Let $i_0=0$, $i_1, \dots, i_9 \in [n]^9$ and let $I_1 = (i_0, i_1, i_2)$, $I_2 = (i_0, i_3, i_4)$, $I_3 = (i_0, i_5, i_6)$ and $J = (i_7, i_8, i_9)$. Let $a_1 = (i_1, i_2), a_2 = (i_3, i_4), a_3 = (i_5,i_6)$. Then, 
$$
\chi_{\wedge}(I_1, I_2, I_3, J) =  -\chi_{\Lambda, \omega}(a_1,a_2,a_3)  \chi(i_0,i_7, i_8, i_9)
$$
where $\chi_{\Lambda}, \chi$ and $\chi_{\wedge, \omega}$ are the chirotopes of the rank 4 matroids $M_\wed(\inc(X))$ and $M_{\mathsf{Lin}}(\inc(X)) = M_{\mathsf{Aff}}(X)$  and the rank 3 matroid $M_{\mathsf{Wit}_{\omega}}(X)$ respectively.\\
Here, $M_{\wit_{\omega}}(X)$ is the witness oriented matroid of the configuration of points in the sphere centered at $\omega$ obtained by radial projection of $X$.
\end{proposition}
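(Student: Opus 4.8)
The plan is to derive Proposition~\ref{prop:wit2} as a direct specialization of Theorem~\ref{thm:lin-wit} to the case $d=4$, together with the identification of the various rank-4, rank-3 chirotopes appearing in the statement. First I would set $d=4$ in Theorem~\ref{thm:lin-wit}, so that the relevant $\alpha$-function takes a point $\omega$ together with two further vectors in $\mathbb{R}^4$, i.e. the tuples $x_{i,1},\dots,x_{i,d-2}$ become pairs $(x_{i,1},x_{i,2})$, and the sign prefactor $(-1)^{d+1}$ becomes $(-1)^5=-1$, which accounts for the minus sign in the statement. Recalling from the definition of $M_\wed(\inc(X))$ that $\chi_\wed(I_1,I_2,I_3,J)=\Delta(\boldsymbol{\alpha}(I_1),\dots,\boldsymbol{\alpha}(I_3),\boldsymbol{\alpha}(J))$ with each $I_k$ and $J$ a $3$-subset of $[n]$ viewed via $\inc$ as a triple of vectors in $\{1\}\times\mathbb{R}^3\subset\mathbb{R}^4$, the left-hand side of Theorem~\ref{thm:lin-wit} applied with $\omega=\vec x_{i_0}$ and $(x_{k,1},x_{k,2})$ the two non-$\omega$ members of $I_k$, and $(y_1,y_2,y_3)$ the three members of $J$, is exactly $\chi_\wed(I_1,I_2,I_3,J)$.

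Next I would identify the two factors on the right-hand side of Theorem~\ref{thm:lin-wit}. The factor $\Delta(\omega,y_1,y_2,y_3)$ is, by definition of $M_{\mathsf{Lin}}(\inc(X))=M_{\mathsf{Aff}}(X)$, precisely $\chi(i_0,i_7,i_8,i_9)$. For the other factor, $\tilde\Delta\bigl(\tilde\alpha(x_{1,1},x_{1,2}),\tilde\alpha(x_{2,1},x_{2,2}),\tilde\alpha(x_{3,1},x_{3,2})\bigr)$, I would invoke the two definitions of the witness strong geometry reconciled in Lemma~\ref{lem:equiv} and the discussion preceding it: $\tilde\alpha$ and $\tilde\Delta$ are the $\alpha$ and $\Delta$ functions of the oriented hyperplane $\omega^\perp\simeq\mathbb{R}^3$, and the linear chirotope and wedge chirotope of the radial projection $\pi_{\omega^\perp}(X)$ coincide with those of $(x_i-\omega)_i$, which is by definition $M_{\wit_\omega}(X)$ and its wedge $\chi_{\Lambda,\omega}$. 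Hence $\tilde\Delta\bigl(\tilde\alpha(a_1),\tilde\alpha(a_2),\tilde\alpha(a_3)\bigr)=\chi_{\Lambda,\omega}(a_1,a_2,a_3)$ with $a_1=(i_1,i_2)$, $a_2=(i_3,i_4)$, $a_3=(i_5,i_6)$. Assembling these three identifications with the $(-1)^{d+1}=-1$ prefactor gives the claimed formula.

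The main obstacle I expect is purely bookkeeping rather than conceptual: one must be careful that the orientation convention on $\omega^\perp$ used in Theorem~\ref{thm:lin-wit} (positive bases $(b_1,\dots,b_{d-1})$ of $\omega^\perp$ are those for which $(\omega,b_1,\dots,b_{d-1})$ is positive in $\mathbb{R}^d$) is exactly the one under which $M_{\wit_\omega}(X)$ is defined via $x\mapsto x-\omega$, and that the equality $\chi_\omega(\cdot,\dots,\cdot)=\chi(\omega,\cdot,\dots,\cdot)$ together with Lemma~\ref{lem:equiv} really does let us pass between the $x\mapsto x-\omega$ picture and the orthogonal-projection picture without an extra sign. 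Since all of this has been set up in the paragraphs preceding Theorem~\ref{thm:lin-wit} and in Lemma~\ref{lem:equiv}, the remaining work is to verify that the indices line up: $i_0=0$ forces $\omega=x_0$, the nine indices $i_1,\dots,i_9$ split into the three projected pairs $a_1,a_2,a_3$ feeding the witness wedge chirotope and the triple $(i_7,i_8,i_9)$ feeding the affine chirotope through $\omega$. Once this translation is spelled out, the proposition is immediate from Theorem~\ref{thm:lin-wit}.
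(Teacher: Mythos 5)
Your proposal is correct and follows essentially the same route as the paper: the paper's own (very terse) proof likewise reads the proposition as Theorem~\ref{thm:lin-wit} specialized to $d=4$ (whence the prefactor $(-1)^{d+1}=-1$), translated through the equivalence of the two definitions of the witnessed wedge chirotope provided by Lemma~\ref{lem:equiv}. Your version simply spells out the index bookkeeping and the identification of the three factors, which the paper leaves implicit.
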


\begin{proof}
    Since the norms of the vectors are not taken into account in a linear matroid and they can be projected on the sphere then the desired equality is simply expressing Theorem \ref{thm:lin-wit} through one of the equivalent definitions of wedge chirotope (by using Lemma \ref{lem:equiv}).
\end{proof}

 \begin{figure}[H]
 \centering
\includegraphics[width=0.38\textwidth]{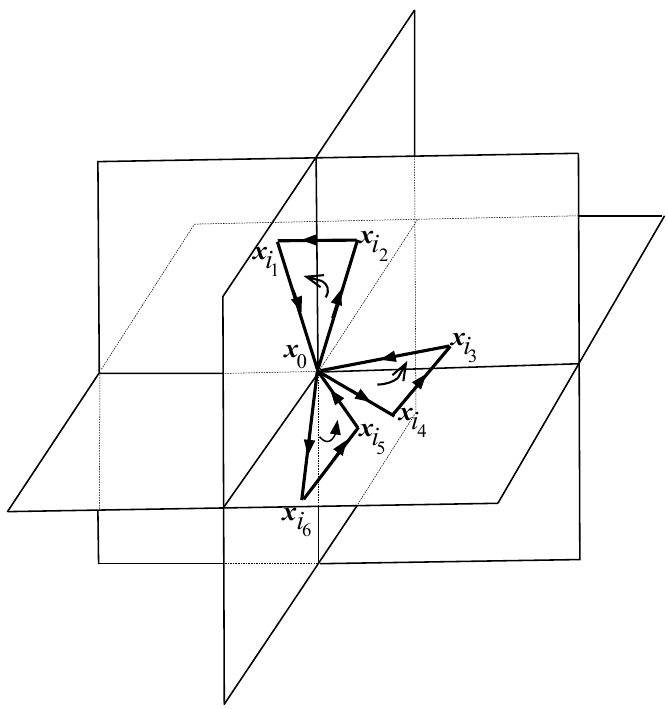}
    \caption{This figure sums up the situation of Proposition \ref{prop:wit2}. The three hyperplanes meet positively or negatively at $x_0$ depending on the sign of the triplet of lines witnessed by $x_0$.} \label{fig:witnhyp}
\end{figure}

\begin{corollary}\label{cor:wed4}
Let $X$ and $X'$ be two $n$-tuples in $\mathbb{R}^4$. If $M_\wed(X)$ and $M_\wed (X')$ are isomorphic then $M_{\mathsf{Lin}}(X)$  and $M_{\mathsf{Lin}}(X')$ also are. 
\end{corollary}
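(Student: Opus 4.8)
The plan is to derive Corollary \ref{cor:wed4} from Proposition \ref{prop:wit2} in the same way that Corollary \ref{cor:chir} was derived from Proposition \ref{prop:wit1}, with one extra bookkeeping layer coming from the choice of witness point. Suppose $M_\wed(X)\cong M_\wed(X')$; fix an isomorphism and relabel so that the wedge chirotopes agree on the ground set $[n]^4$ (up to global sign of the wedge chirotope, which does not matter since the statement is about $M_{\mathsf{Lin}}$ up to nothing — but let me be careful: an isomorphism of oriented matroids may reverse the chirotope, so I should track a global sign $\varepsilon_\wed\in\{\pm1\}$ throughout). First I would single out the witness point: pick any index, say $i_0=0=\omega$, and restrict attention to those hyperplane-vectors $\alpha(I)$ with $0\in I$, i.e. $I$ of the form $(0,i_1,i_2,i_3)$. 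For such triples, Proposition \ref{prop:wit2} (stated there in rank $3$ affine, i.e. dimension $3$; for the rank-$4$/dimension-$4$ statement one uses Theorem \ref{thm:lin-wit} with $d=4$, giving a sign $(-1)^{d+1}=(-1)^5=-1$) expresses $\chi_\wed(I_1,I_2,I_3,I_4)$ in terms of $\chi_{\Lambda,\omega}$ of the three ``residual'' pairs and $\chi(\omega,\cdot,\cdot,\cdot)$.

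Next I would extract $M_{\mathsf{Lin}}(X)$ restricted to bases through $\omega$. Taking the residual parameters equal (the analogue of the substitution $(x,y)=(b,c)$ in the proof of Corollary \ref{cor:chir}), the factor $\chi_{\Lambda,\omega}$ becomes a square and drops out, so that $\chi_\wed$ on a suitable family of elements of $[n]^4$ reads off $\chi(\omega,i_7,i_8,i_9,i_{10})$ up to the fixed global sign $\varepsilon_\wed$. Hence from $\chi_\wed=\varepsilon_\wed\,\chi'_\wed$ we deduce that, for each fixed $\omega$, the contraction-type chirotopes $\chi(\omega,\cdot)$ and $\chi'(\omega,\cdot)$ agree up to a sign $\varepsilon_\omega$ possibly depending on $\omega$. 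Equivalently: the rank-$3$ linear matroids $M_\omega:=M_{\mathsf{Lin}}((x_i-\omega)_{i})$ and $M'_\omega$ are equal or opposite for every choice of $\omega\in\{x_0,\dots,x_{n-1}\}$ (here one should note that $\chi(\omega,\cdot,\cdot,\cdot)$ is exactly the witness chirotope $\chi_\omega$ of Section 3, but in rank $3$ the witnessed strong geometry is just the ordinary chirotope).

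Then I would glue the local data into global data. Knowing, for each $\omega=x_j$, the rank-$3$ oriented matroid $M_{x_j}$ up to sign amounts to knowing, up to sign, all $4\times4$ sign-determinants $\chi(x_j,x_{k_1},x_{k_2},x_{k_3})$ with the same first entry $x_j$. As $j$ ranges over $[n]$ these families of bases overlap heavily (any basis with $\ge 2$ common ground elements lies in two such families, via a matroid basis-exchange argument exactly as in the proof of Corollary \ref{cor:chir}), so the local signs $\varepsilon_{x_j}$ can be synchronized into a single global sign $\varepsilon\in\{\pm1\}$ with $\chi=\varepsilon\,\chi'$ on all bases of $M_{\mathsf{Lin}}(X)$ containing at least two of the points $x_0,\dots,x_{n-1}$ — which, since $r=4<n$ and every basis has $4\ge 2$ elements, is all bases. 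Therefore $M_{\mathsf{Lin}}(X)$ and $M_{\mathsf{Lin}}(X')$ are equal or opposite, hence isomorphic, as claimed. (A small degenerate-case caveat: the substitution argument needs the residual pair to be in general position so that the squared factor $\chi_{\Lambda,\omega}$ is nonzero; the triples with $\chi_{\Lambda,\omega}=0$, i.e. $\chi=0$ triples, are handled separately exactly as the ``$\chi(a,b,c)=0$'' case in Proposition \ref{prop:wit1}, since a zero $\chi_\wed$-value on the appropriate family pins down a zero $\chi$-value.)

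The main obstacle I expect is the synchronization of signs across different witness points combined with the correct tracking of the parity sign $(-1)^{d+1}$: the relation from Theorem \ref{thm:lin-wit} in dimension $4$ carries a ``$-$'', and the abstract isomorphism $M_\wed(X)\cong M_\wed(X')$ is only defined up to a global chirotope reversal and up to a relabelling permutation of $[n]^4$ (which must be checked to be induced by a permutation of $[n]$, or at least to respect the ``contains $0$'' substructure). Making the reduction rigorous therefore requires arguing that the isomorphism can be taken to come from a relabelling of the underlying $n$ points — plausibly because the wedge matroid remembers, combinatorially, which of its elements $\alpha(I)$ arise from a common point (they are exactly the ones lying on a common cocircuit/hyperplane structure), so the ``star'' of hyperplane-vectors through a fixed $x_j$ is an intrinsic feature preserved by any isomorphism. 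Once that structural point is secured, the rest is the routine basis-exchange bookkeeping already exemplified in the proof of Corollary \ref{cor:chir}, now applied one dimension up and relative to each witness point.
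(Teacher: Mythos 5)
Your proposal rests on the same two ingredients as the paper's proof --- the squaring identity of Proposition \ref{prop:wit1} fed into the witness formula of Proposition \ref{prop:wit2} --- but it takes a genuinely longer route and misses the simplification that makes the paper's argument a three-line computation. The paper evaluates the wedge chirotope on the single quadruple $\bigl(\alpha(x,a,b),\alpha(x,a,c),\alpha(x,b,c),\alpha(a,b,c)\bigr)$: by Proposition \ref{prop:wit1} the witnessed factor is $\chi_x(a,b,c)^2=\chi(x,a,b,c)^2$, so Proposition \ref{prop:wit2} yields $\chi_{\wedge}(\cdots)=\chi(x,a,b,c)^3=\chi(x,a,b,c)$ (up to the fixed global sign $(-1)^{d+1}$, which is immaterial and which the paper's displayed computation in fact drops). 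Since $t^2\,t=t$ holds for every $t\in\{-1,0,1\}$, this identity is valid verbatim in the degenerate case, so the wedge chirotope determines each value $\chi(x,a,b,c)$ \emph{exactly}, not merely up to a sign depending on the witness. Consequently your entire third stage --- the per-witness signs $\varepsilon_\omega$, the basis-exchange synchronization across overlapping stars, and the separate treatment of the zero set --- is superfluous: there is nothing to glue, and this exactness is also why the conclusion in rank $4$ is ``isomorphic'' rather than the weaker ``equal or opposite'' of Corollary \ref{cor:chir}. Your closing caveat about whether an abstract isomorphism of $M_\wed$ must come from a relabelling of the $n$ points is a fair reading of the statement, but the paper (as in the proof of Corollary \ref{cor:chir}) clearly intends the ground set $[n]^d$ to be canonically identified, so that ``isomorphic'' means equal chirotopes up to a single global sign under the tautological labelling; under that reading your reduction, like the paper's, closes. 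Two small slips to fix if you keep your version: ``taking the residual parameters equal'' should read ``taking the three residual pairs to be the three pairs drawn from one triple $\{a,b,c\}$, with $J=(a,b,c)$'' (literally equal residual pairs would make the witnessed wedge factor vanish identically), and $\chi(\omega,i_7,i_8,i_9,i_{10})$ has one argument too many for a rank-$4$ chirotope.
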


\begin{proof}

According to Proposition \ref{prop:wit1}, $\chi(a\wedge b, a \wedge c,  b\wedge c) = \chi(a,b,c)^2$. By plugging this into the equality of Proposition \ref{prop:wit2}, we obtain 

\begin{align*}
\chi \big((\alpha(x,a,b), \alpha(x,a,c), \alpha(x,b,c), \alpha(a,b,c)\big) &= \chi_x(a,b,c)^2 \chi(x, a, b,c)\\
&= \chi(x,a,b,c)^2 \chi(x,a,b,c)\\
&=\chi(x,a,b,c).
\end{align*}   
\end{proof}

Corollaries \ref{cor:chir} and \ref{cor:wed4} can be extended to higher dimension.

\begin{corollary}\label{cor:wedd} Let $X$ and $X'$ be two $n$-tuples in $\mathbb{R}^d$. If $M_\wed(X)$
is isomorphic to $M_\wed(X')$ then the chirotopes of $M_{\mathsf{Lin}}(X)$  and $M_{\mathsf{Lin}}(X')$ are equal if $d$ is even and either 
equal or opposite if $d$ is odd.  
\end{corollary}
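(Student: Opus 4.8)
The plan is to reduce, exactly as in the proof of Corollary \ref{cor:chir}, to the following statement about a single vector configuration $X=(\vec x_1,\dots,\vec x_n)$ in $\mathbb R^d$: the wedge chirotope $\chi_\Lambda$ of $M_\wed(X)$ determines the linear chirotope $\chi$ of $M_{\mathsf{Lin}}(X)$ as a function (hence exactly) when $d$ is even, and determines it up to one global change of sign when $d$ is odd. The ``$\pm$'' in the odd case cannot be removed: replacing $X$ by $-X$ multiplies the hyperplane-vector $\boldsymbol{\alpha}(I)$ of a $(d-1)$-subset $I$ by $(-1)^{d-1}$, so $M_\wed(-X)=M_\wed(X)$ for every $d$ (as $d(d-1)$ is even), whereas $\Delta(-\vec x_{i_1},\dots,-\vec x_{i_d})=(-1)^d\Delta(\vec x_{i_1},\dots,\vec x_{i_d})$, i.e.\ $M_{\mathsf{Lin}}(-X)$ has chirotope $-\chi$ when $d$ is odd.

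The engine is a cofactor identity. Fix a $d$-subset $\{j_1,\dots,j_d\}\subseteq[n]$ and let $\vec\alpha_k=\boldsymbol{\alpha}\big(\{j_1,\dots,j_d\}\setminus\{j_k\}\big)\in\mathbb R^d$ be the hyperplane-vector of the hyperplane spanned by the $\vec x_{j_\ell}$ with $\ell\neq k$. The orientation convention built into $\boldsymbol{\alpha}$ says precisely that $\vec\alpha_k$ is, up to a positive scalar, the vector representing the linear form $\vec v\mapsto\det(\vec x_{j_1},\dots,\widehat{\vec x_{j_k}},\dots,\vec x_{j_d},\vec v)$; hence $\langle\vec\alpha_k,\vec x_{j_\ell}\rangle=0$ for $\ell\neq k$, while $\sign\langle\vec\alpha_k,\vec x_{j_k}\rangle=(-1)^{d-k}\chi(j_1,\dots,j_d)$. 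Evaluating $\det(\vec\alpha_1,\dots,\vec\alpha_d)\,\det(\vec x_{j_1},\dots,\vec x_{j_d})$ as the determinant of the diagonal matrix $\big(\langle\vec\alpha_k,\vec x_{j_\ell}\rangle\big)_{k,\ell}$ and taking signs yields
\[
\chi_\Lambda(\vec\alpha_1,\dots,\vec\alpha_d)\;=\;(-1)^{d(d-1)/2}\,\chi(j_1,\dots,j_d)^{\,d-1},
\]
both sides vanishing when $\{j_1,\dots,j_d\}$ is dependent (then the nonzero $\vec\alpha_k$'s are mutually parallel). If $d$ is even, the exponent $d-1$ is odd, so the right-hand side is $(-1)^{d(d-1)/2}\chi(j_1,\dots,j_d)$; since permuting inputs changes both sides by the same sign, $\chi_\Lambda$ determines every value of $\chi$, which settles the even case.

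If $d$ is odd the identity above only returns $\chi(j_1,\dots,j_d)^2$, i.e.\ the underlying matroid, so I would bring in Theorem \ref{thm:lin-wit}. Take a witness $\omega=\vec x_{i_0}$, a $(d-1)$-subset $\{z_1,\dots,z_{d-1}\}\subseteq[n]\setminus\{i_0\}$ and a further $(d-1)$-subset $\{y_1,\dots,y_{d-1}\}$. Apply Theorem \ref{thm:lin-wit} with the $(d-2)$-tuples $\{z_1,\dots,z_{d-1}\}\setminus\{z_k\}$, $k=1,\dots,d-1$, on one side and $y_1,\dots,y_{d-1}$ on the other, and then simplify the resulting witnessed wedge chirotope $\chi_{\Lambda,\omega}$ by the cofactor identity applied in $\omega^{\perp}\simeq\mathbb R^{d-1}$ — there $d-2$ is odd, so that application lands in its ``exact'' range and produces $\chi(i_0,z_1,\dots,z_{d-1})$ up to the fixed sign $(-1)^{(d-1)(d-2)/2}$. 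Using $(-1)^{d+1}=1$, this gives
\[
\chi_\Lambda(H_1,\dots,H_{d-1},H_0)\;=\;(-1)^{(d-1)(d-2)/2}\,\chi(i_0,z_1,\dots,z_{d-1})\,\chi(i_0,y_1,\dots,y_{d-1}),
\]
where $H_k$ is the hyperplane through $\vec x_{i_0}$ and the $\vec x_{z_\ell}$ with $\ell\neq k$, and $H_0$ is the hyperplane on $\{y_1,\dots,y_{d-1}\}$. This is the higher-rank counterpart of Proposition \ref{prop:wit1}: $\chi_\Lambda$ recovers, up to a fixed sign, the product of the orientations of any two bases of $M_{\mathsf{Lin}}(X)$ sharing the element $i_0$. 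Since any two bases are joined by a chain of single-element exchanges and consecutive bases of such a chain share an element, all pairwise products $\chi(B)\chi(B')$ are then determined (tracking the known chain length absorbs the fixed sign); fixing the orientation of one base shows $\chi$ is determined up to a single global sign, exactly as in the proof of Corollary \ref{cor:chir}. This finishes the odd case.

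I expect the main obstacle to be the sign arithmetic: pinning down the exponent $d-1$ and the prefactor $(-1)^{d(d-1)/2}$ in the cofactor identity — in particular checking them against the orientation convention defining $\boldsymbol{\alpha}$ and against the degenerate tuples — and making sure that chaining the two-bases products along basis exchanges introduces no ambiguity beyond the one global sign in the odd case and none in the even case. Everything else is bookkeeping of known facts: Theorem \ref{thm:lin-wit}, the basis exchange property, and the reduction from ``isomorphic wedge matroids'' to ``equal wedge chirotopes after relabelling $[n]$'', which is handled verbatim as in Corollary \ref{cor:chir}.
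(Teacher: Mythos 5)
Your proof is correct, and it carries out exactly the program the paper only gestures at (``a consequence of Theorem \ref{thm:lin-wit}, analogue to the lower rank cases''): Theorem \ref{thm:lin-wit} plus the basis-exchange chaining of Corollary \ref{cor:chir} for the odd case, and a direct identity for the even case. The one place where you genuinely streamline the paper's route is the cofactor identity $\chi_\Lambda(\vec\alpha_1,\dots,\vec\alpha_d)=(-1)^{d(d-1)/2}\chi(j_1,\dots,j_d)^{d-1}$ for the $d$ hyperplane-vectors of the facets of a single $d$-subset. I checked its sign normalization against the paper's two instances: for $d=3$ it reproduces $\chi(a\wedge b,a\wedge c,b\wedge c)=\chi(a,b,c)^2$ after accounting for the reversal of the triple (an odd permutation), and for $d=4$ it gives $\chi_\Lambda(\alpha(x,a,b),\dots,\alpha(a,b,c))=\chi(x,a,b,c)$ directly, recovering Corollary \ref{cor:wed4} without passing through the witness chirotope of Propositions \ref{prop:wit1} and \ref{prop:wit2} as the paper does. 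This identity both settles the even case in one stroke and identifies the non-bases in all cases, which is needed before the exchange argument and is only implicit in the paper. Your observation that $M_\wed(-X)=M_\wed(X)$ while $\chi_{\mathsf{Lin}}(-X)=(-1)^d\chi_{\mathsf{Lin}}(X)$ is a welcome addition showing the odd-case ambiguity is unavoidable, which the paper does not record. The only caveat, which you flag yourself, is that the statement and Corollary \ref{cor:chir} implicitly read ``isomorphic'' as ``equal wedge chirotopes after relabelling $[n]$''; under that reading your argument is complete, since all the universal signs $(-1)^{d(d-1)/2}$ and $(-1)^{(d-1)(d-2)/2}$ are common to $X$ and $X'$ and cancel in the comparison.
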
 

The proof is also a consequence of Theorem \ref{thm:lin-wit} (analogue to the lower rank cases with some extra tedious indices manipulation).


\section{Geometric knots}\label{sec:geomknot}

Let us first recall a simple way to encode a knot diagram that enable to reconstruct an equivalent diagram from it. 

\subsection{Gauss code} A {\em Gauss code} of an oriented knot diagram with $n$ crossings is constructed as follows. Label all the crossings of the diagram from $1$ to $n$ (in an arbitrary manner). 
The Gauss code is derived by walking the knot, starting at point $P$ of the diagram (picked arbitrarily  and other than a crossing). As we follow the diagram, we record the crossings we encounter, by writing down the labels preceded with an `O' or `U' to indicate whether the curve goes {\em Over} or {\em Under} strand, see Figure \ref{fig:codeG}.

 \begin{figure}[H]
 \centering
\includegraphics[width=0.2\textwidth]{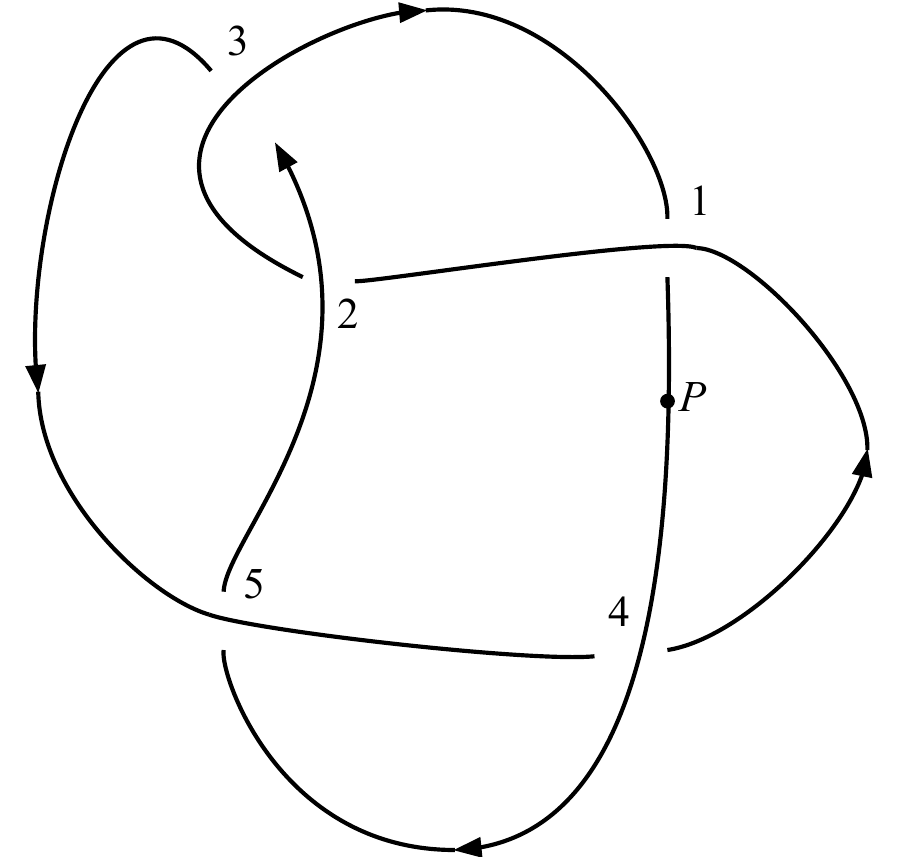}
    \caption{Knot $5_2$ with Gauss code \small{O4 U5 O2  U3  O5 U4  O1  U2  O3 U1}.} \label{fig:codeG}
\end{figure}

In general, the Gauss code for a knot diagram cannot be used to reconstruct an equivalent diagram, but an extension of it will make the reconstruction possible. The {\em extended Gauss code} is a minor revision of Gauss code : as we encounter a given crossing, we recorded it as above and we also assign a sign depending on the handedness of the crossing. If it is {\em right-handed}, we assign positive; if it is {\em left-handed}, negative, see Figure \ref{fig:cross}.

 \begin{figure}[H]
 \centering
\includegraphics[width=0.3\textwidth]{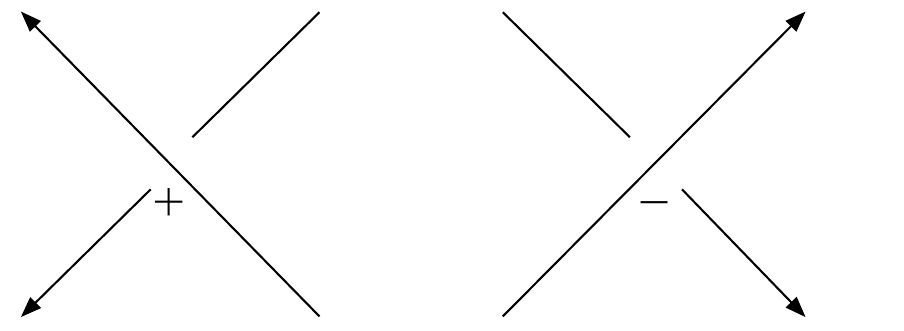}
    \caption{Positive and negative oriented crossings.} \label{fig:cross}
\end{figure}

The extended Gauss code of the example illustrated in Figure \ref{fig:codeG} is given by 
$$\hbox{$O\text{}_+4 \ \text{U}_+5 \ \text{O}_+2\  \text{U}_+3\  \text{O}_+5\ \text{U}_+4\ \text{O}_-1\  \text{U}_+2\  \text{O}_+3\ \text{U}_-1$. }$$

A diagrammatic representation of an extended Gauss code is given by a  {\em Gauss diagram} constructed as follows. Take an oriented circle with a base point chosen on the circle. Walk along the circle marking it with the labels for the crossings in the order of the Gauss code. Now draw chords between the points on the circle that have the same label. Orient each chord from the over crossing to the under crossing in the Gauss code.  Mark each chord with $+$ or $-$ according to the sign of the corresponding crossing in the Gauss code. An example of the Gauss diagram for the knot $6_3$ is given in Figure \ref{fig:gauss}.

 \begin{figure}[H]
 \centering
\includegraphics[width=0.42\textwidth]{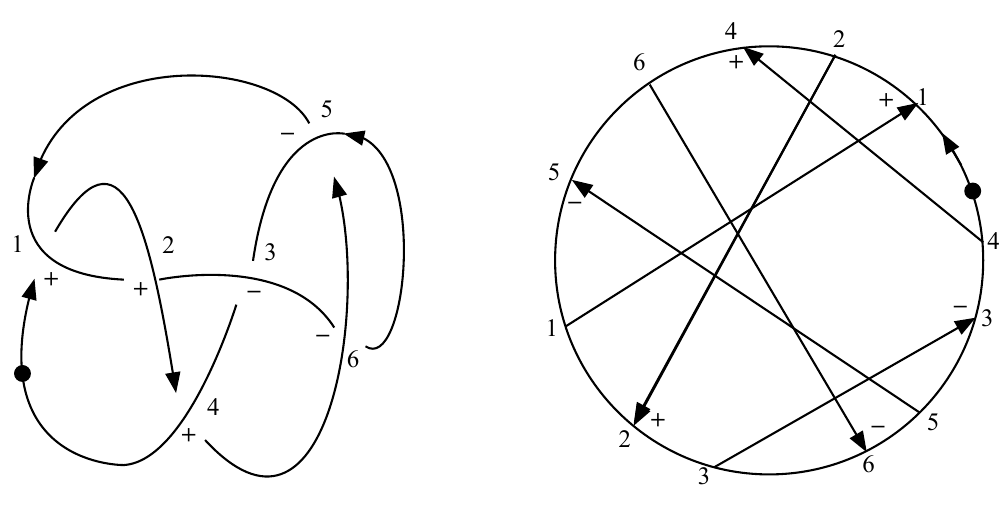}
    \caption{(Left) $6_3$ with Gauss code  \small{$\text{U}_+1\ \text{O}_+2\ \text{U}_+4\  \text{O}_-6\  \text{U}_-5\ \text{O}_+1\ \text{U}_+2\  \text{O}_-3\  \text{U}_-6\ \text{O}_-5\ \text{U}_-3\ \text{O}_+4$} (Right) Corresponding Gauss diagram.} \label{fig:gauss}
\end{figure}

It is known that a knot diagram on the sphere can be recovered uniquely (up to isotopy) from its Gauss diagram which can thus be considered as an alternative way to present knots. Unfortunately, not every picture which looks like a Gauss diagram is indeed a Gauss diagram of some knot. This is not easy to recognize, see \cite{CE}.

A Gauss diagram can also be represented by an oriented line where the ends are identified and having oriented signed arcs corresponding to the chords, see en Figure \ref{fig:gaussline}.

 \begin{figure}[H]
 \centering
\includegraphics[width=0.49\textwidth]{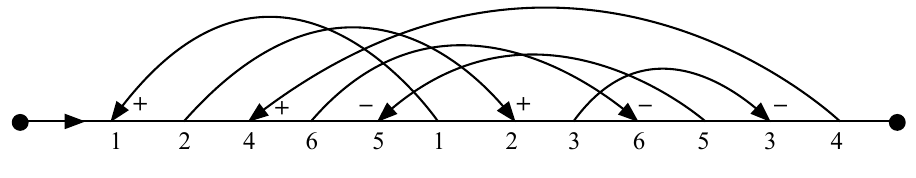}
    \caption{Line representation of the Gauss diagram illustrated in Figure \ref{fig:gauss}.} \label{fig:gaussline}
\end{figure}



\subsection{Knotoids}
{\em Knotoids} are variants of knots that were first introduced by Turaev  in \cite{Tur}. The idea is to consider diagrams of segments with endpoints. Contrarily to knots, they are only defined as equivalence classes of diagrams, since all embedding of the interval in $\rth$ are ambiant isotopic. More precisely, a {\em knotoid} is an equivalence class of {\em knotoid diagrams}. A {\em knotoid diagram} is the image of an application $[0,1] \to \mathbb{R}^2$ that satisfies the usual properties of a knot diagram, that is, each point has at most two preimages, called crossings, and there is a finite number of such crossings; besides, crossings happen between two transversal strands. We further suppose that the endpoints do not lie on crossings. The equivalence relationship considered here is the one generated by planar isotopy and the usual Reidemeister moves ; endpoints are not allowed to cross strands.

 \begin{figure}[H]
 \centering
\includegraphics[width=0.22\textwidth]{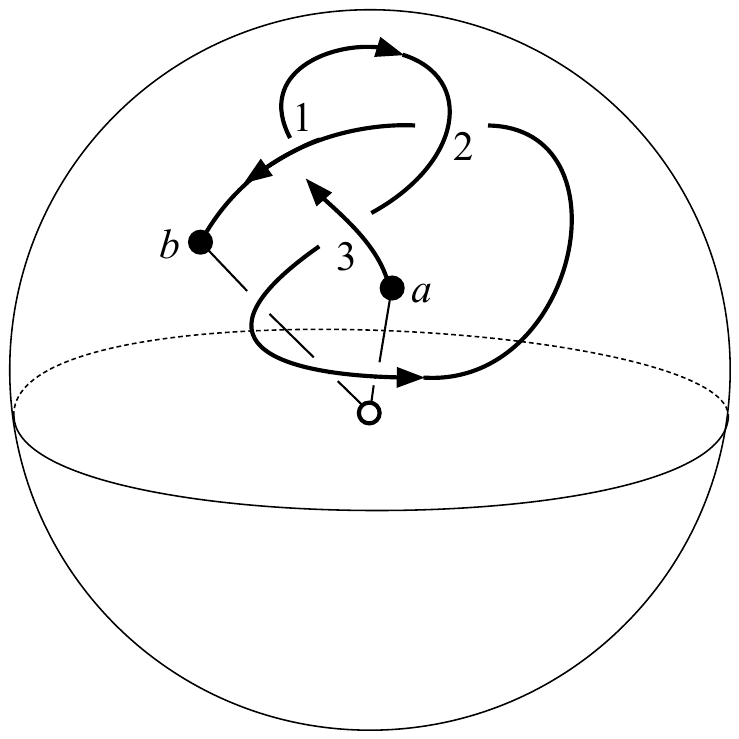}
    \caption{ The knotoid called {\em line eight-figure}.} \label{fig:linediag}
\end{figure}

Similarly as we do for knots, we may associate to a knotoid diagram its Gauss diagram.
Take an oriented segment, say $S$, with an initial and end points. Walk along $S$ marking it with the labels for the crossings in the order they are encountered as walking through the knotoid from one end to the other. Now draw arcs between the marks on $S$ that have the same label. Orient each arc from the over crossing to the under crossing in the knotoid.  Mark each arc with $+$ or $-$ according to handedness' rule, see Figure \ref{fig:lineGdiag}.

 \begin{figure}[H]
 \centering
\includegraphics[width=0.36\textwidth]{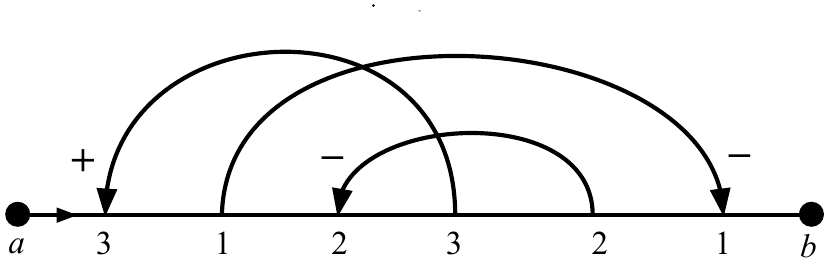}
    \caption{The Gauss diagram arising from knotoid given in Figure \ref{fig:linediag}.} \label{fig:lineGdiag}
\end{figure}

\begin{lemma}\label{lem:gauss} A knotoid diagram is determined by its Gauss diagram (up to a planar isotopy).
\end{lemma}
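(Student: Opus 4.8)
The plan is to reconstruct, from a Gauss diagram, a knotoid diagram in the plane and then argue that any two diagrams built this way are related by planar isotopy (and in fact equal up to the freedom in the construction). The key point is that a Gauss diagram is exactly the combinatorial data of a $4$-valent plane graph together with over/under and sign information at each vertex, plus two univalent vertices (the endpoints), so the reconstruction is essentially the same as the classical statement that a knot diagram on the sphere is recovered from its Gauss diagram, adapted to the case of an arc.

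First I would set up the reconstruction. Given a Gauss diagram on the oriented segment $S$ with $n$ chords, form the underlying $4$-valent graph $G$: take $S$, place the $2n$ marked points as degree-$2$ interior vertices, glue the two endpoints of each chord into a single degree-$4$ vertex, and keep the two endpoints of $S$ as degree-$1$ vertices. The cyclic order of the four strand-ends at each crossing is forced by the over/under and sign data (this is where the sign of the crossing is used: it fixes which of the two ways to interleave the over-strand and under-strand germs is realized). Then I would invoke the fact that a connected planar graph with a prescribed rotation system has an embedding in $S^2$ unique up to orientation-preserving homeomorphism of $S^2$ (and the mirror is excluded because the rotation system is fixed, not just the cyclic structure). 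Puncturing $S^2$ in a face adjacent to one of the two endpoints — or rather, observing that the two univalent vertices and the choice of which face is the outer one is already determined by reading the diagram as living in $\mathbb{R}^2$ — yields a knotoid diagram in the plane whose Gauss diagram is the given one.

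Next I would prove uniqueness. Suppose $D_1$ and $D_2$ are two knotoid diagrams with the same Gauss diagram. Their underlying abstract $4$-valent graphs-with-rotation are identical by construction, so there is an orientation-preserving homeomorphism of $S^2$ carrying $D_1$ to $D_2$ as decorated diagrams; since an orientation-preserving homeomorphism of $S^2$ is isotopic to the identity, $D_1$ and $D_2$ are related by an ambient isotopy of $S^2$. To descend to a \emph{planar} isotopy of knotoid diagrams one must check that this isotopy can be taken to fix a disk neighborhood of the point at infinity, i.e. that the two diagrams sit in the same face of the sphere arrangement; this is arranged because for a knotoid the endpoints carry no chord, so they sit on the boundary of well-defined regions and one can always isotope so that the outer region is the same — the analogue of the sphere-recovery statement for knots, where the choice of outer face only changes the diagram by isotopy. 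Hence $D_1$ and $D_2$ are planar isotopic, which is the claim.

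The main obstacle, and the step I would spend the most care on, is the bookkeeping at crossings: making fully precise that the sign together with the over/under label determines the rotation system at each $4$-valent vertex, and conversely, and that nothing is lost in passing between ``knotoid diagram'' and ``decorated plane graph with two univalent vertices''. A secondary subtlety is the passage from ``isotopic on $S^2$'' to ``planar isotopic'', since knotoid diagrams live in $\mathbb{R}^2$ and the placement of the two endpoints relative to the outer face matters; I would handle this exactly as in the classical knot case, noting that moving the point at infinity across a strand is a permitted planar isotopy of knotoid diagrams as long as it does not drag an endpoint across a strand, and that the freedom in the reconstruction is precisely this harmless choice. Everything else is the standard combinatorial-maps-to-surfaces dictionary.
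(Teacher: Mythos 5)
Your argument is correct, but it is not the route the paper takes; in fact it is essentially the route the paper explicitly sets aside. You reconstruct the diagram as a combinatorial map: the over/under labels and the sign determine the rotation system at each $4$-valent vertex, and you then invoke the standard theorem that a connected graph with a prescribed genus-zero rotation system embeds in $S^2$ uniquely up to orientation-preserving homeomorphism (hence up to ambient isotopy of the sphere). The paper instead announces a ``new approach'': it never mentions rotation systems or the embedding-uniqueness theorem, but directly reconstructs the \emph{faces} of the diagram as the ``valid travels'' in the Gauss diagram --- walks obeying the local turning rules ${\bf N}\rightarrow{\bf E}\rightarrow{\bf S}\rightarrow{\bf W}\rightarrow{\bf N}$, which force the corresponding walk in the diagram to always turn left, plus a rule for bouncing back at the two endpoints of the segment --- and recovers the diagram from its faces and their shared edges. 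The two proofs are really the same mechanism viewed from opposite ends (your rotation system determines the faces by face-tracing; the paper's valid travels \emph{are} those face-tracing walks), so your version buys brevity and a citation to standard combinatorial-map theory, while the paper's version is self-contained and generalizes with one extra local rule to graphoids (Lemma \ref{lem:gaussgraphdiag}), which is why the authors set it up this way.

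One point where you are less than fully rigorous is the descent from ``isotopic on $S^2$'' to ``planar isotopic'': two planar diagrams with the same Gauss diagram can differ by which face is unbounded, and pushing a strand through infinity is not a planar isotopy, so the choice of outer face is genuinely not encoded in the Gauss diagram. Your claim that one ``can always isotope so that the outer region is the same'' is therefore not justified as stated. However, the paper's own proof reconstructs only the face structure and has exactly the same ambiguity, and in the application (Theorem \ref{thm:main}) the knotoid diagrams are spherical diagrams obtained by radial projection, so the $S^2$ statement --- which both you and the paper do establish --- is what is actually used. I would not count this against you beyond noting that the caveat should be made explicit rather than waved away.
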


\begin{proof}  A proof might follow by slightly adapting the classic procedure showing that Gauss diagrams determine knot diagrams given in \cite{Kauff}. Nevertheless, we propose a new approach.  We will see that any face of the knotoid diagram can be determined by using the Gauss diagram. We shall show that the set of faces (and whether two of them share an edge) in the knotoid diagram corresponds to the set of {\em valid} travels in the Gauss diagram.
\smallskip

Let us consider a walk $W$ on the arcs on the boundary of a face (say, counterclockwise) in the knotoid diagram. Notice that we can run through an arc either following its direction or in opposite direction, see Figure \ref{fig:walk}.

 \begin{figure}[H]
 \centering
\includegraphics[width=0.23\textwidth]{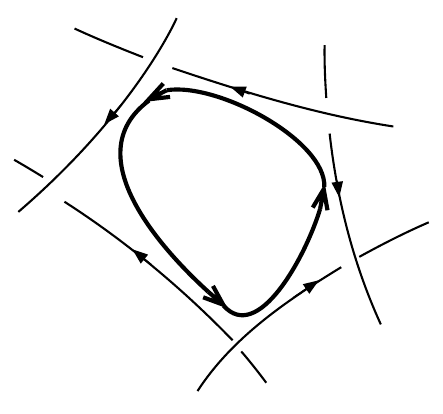}
    \caption{Walk around a boundary of a face.} \label{fig:walk}
\end{figure}

We associate to $W$ a travel $T$ along the Gauss diagram. Turns of $W$ at a crossing correspond to jumps in arrows (from head/tail to tail/head) and each arc in $W$ corresponds to a piece of the oriented segment $S$ (of the Gauss diagram). Travel $T$ may follow or not the direction of $S$  (depending on whether or not $W$ follows the direction of the corresponding arc). Since $W$ goes around a face then this correspondence obeys certain rules according to the sign of the crossing and from which strand (over/under) the walk comes from and goes to. Theses rules are illustrated in Figure \ref{fig:walkTrial}.

 \begin{figure}[H]
 \centering
\includegraphics[width=0.75\textwidth]{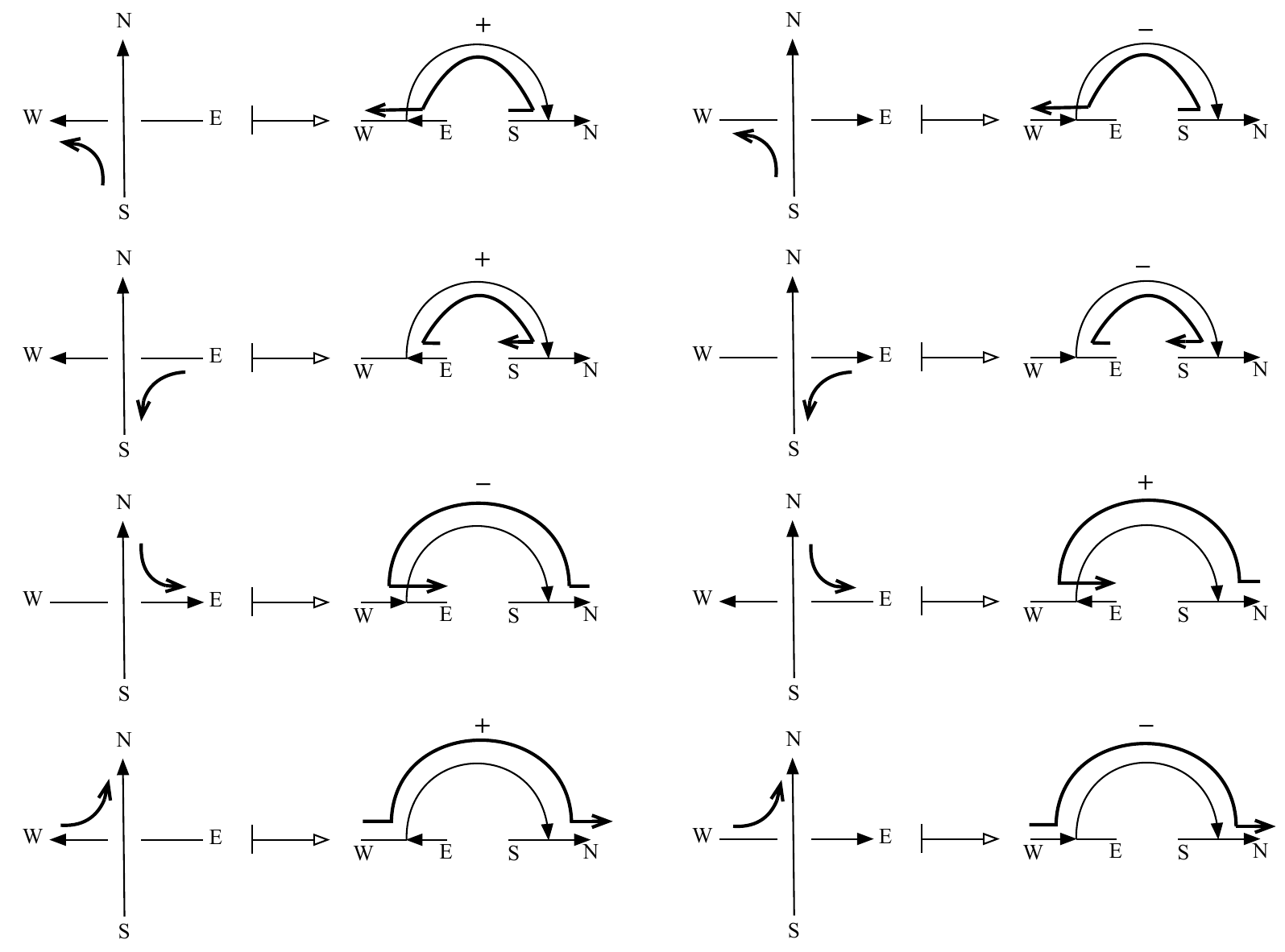}
    \caption{Local turnings of a walk around a crossing and the corresponding local travel in the Gauss diagram (in bold).} \label{fig:walkTrial}
\end{figure}

The travel $T$ constructed this way is called a {\em valid} travel in the Gauss diagram. In fact, if the labels {\bf N}orth, {\bf S}outh, {\bf E}ast and {\bf W}est, as well as the signs of each arrow in a Gauss diagram are fixed  then, a valid travel $T$ verifies the rule : $${\bf N} \rightarrow {\bf  E} \rightarrow {\bf  S}\rightarrow {\bf  W}\rightarrow {\bf  N}$$
where ${\bf X} \rightarrow {\bf  Y}$ means \guillemotleft  go from ${\bf X}$ to ${\bf  Y}$\guillemotright.
 
These rules on $T$ force, by construction, that the corresponding walk $W$ always turns \guillemotleft left\guillemotright \ at each crossing, see Figure \ref{fig:valid}

 \begin{figure}[H]
 \centering
\includegraphics[width=0.3\textwidth]{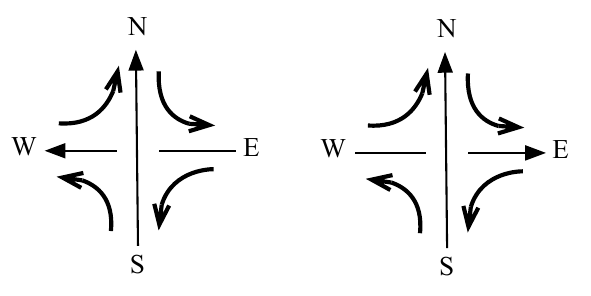}
    \caption{Turns of $W$ induced by valid travels.} \label{fig:valid}
\end{figure}

Therefore, $W$ will be necessarily a walk of a face in the knotoid. Moreover, two valid travels sharing a same piece of $S$ (with opposite direction) correspond to two faces sharing an edge.

Finally, we observe that a valid travel going through an end of the line goes back with opposite orientation, see Figure \ref{fig:walkend}.

 \begin{figure}[H]
 \centering
\includegraphics[width=0.45\textwidth]{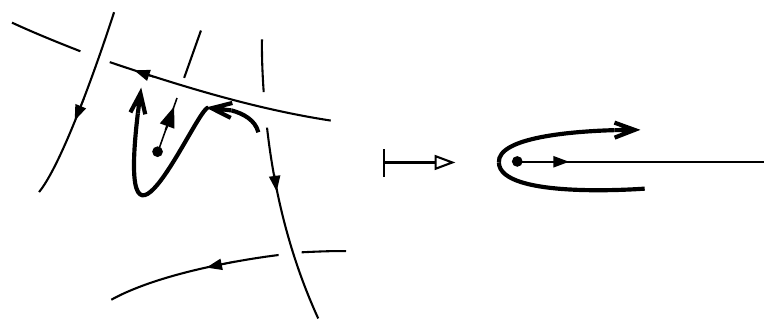}
    \caption{Walk of a face containing a degree one vertex and the corresponding travel.} \label{fig:walkend}
\end{figure}

\end{proof}

\subsection{Main result} Let $y_0 \in \mathbb{R}^3$ and let $\pi_{y_0}$ be the radial projection emitting from $y_0$ to $\stw$, that is, 
$$\begin{array}{lclc}
\pi_{y_0} : & \rth\setminus\{y_0\}& \rightarrow & \stw\\
& y &\mapsto & \frac{y-y_0}{||y-y_0||}
\end{array}$$

Let $K$ be a knot in $\rth$ (not passing through $y_0$). We may associate to $K$ a {\em sphere shadow} which is just the radial projection $\pi_{y_0}(K)$.
We may suppose that such a shadow is regular in the sense that it avoids cusps and tangency points (this can be obtained my making some suitable local modifications to $K$ without changing its type).  Let $z$ be a point in $\pi_{y_0}(K)$, suppose that 
$$z=\pi_{y_0}(y_{1})=\cdots =\pi_{y_0}(y_{k}), \ k\ge 1.$$

We say that $z$ is a {\em simple intersection} point if $k=2$ and a {\em multiple intersection} point if $k\ge 3$. We may avoid multiple intersection points by moving pieces of the shadow around $z$ properly.  As for standard knots, doing such a perturbation does not change the knot type, however, in our case we need slightly more. We have to make sure that the strong geometry captures the information determining the possible diagrams.

The set of possible perturbations is completely determined by the local type of the multiple intersection, that is, the order (say counterclockwise) in which the strands appear as we rotate around the intersection point (because this determines the local diagram up to planar isotopy).  Suppose that the strands are oriented inducing a `right' and a `left' side of the strand. Hence, each strand $a_i$ partitions the other $a_j$, $j\neq i$ into two sets, those crossing $a_i$ left-to-right and those crossing it right-to-left.

\begin{figure}[H]
 \centering
\includegraphics[width=0.5\textwidth]{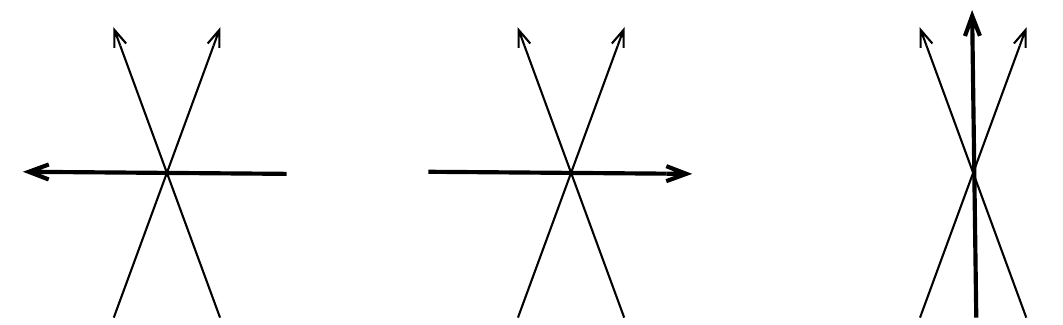}
\caption{(Left) Two consecutive strands (bold strand crosses both  right-to-left) (Center) Two consecutive strands (bold strand crosses both  left-to-right)
(Right) Two nonconsecutive strands (bold strand crosses left-to-right one of them and right-to-left the other).} \label{fig:mod}
\end{figure}

We have that two strands are consecutive if and only they partition the other strands in equal or opposite sets. Therefore, we simply need to know the sign of each crossing in order to know what all the possible local perturbation. This piece of information will be shown to be induced by the strong geometry. 

If the reader is not comfortable with these local permutations, instead, a notion of knot diagram allowing multiple crossings could be defined. The reasoning to check that the strong geometry gives the information about the strands around such multiple crossings would be essentially the same as above.

Therefore, sphere shadows can be thought of as  4-regular maps (i.e., an embedding of a 4-regular planar graph into $\stw$). A {\em spherical diagram} of $K$ is obtained from such a shadow by endowing the under/over information to each vertex, see Figure \ref{fig:tref}.

 \begin{figure}[H]
 \centering
\includegraphics[width=0.21\textwidth]{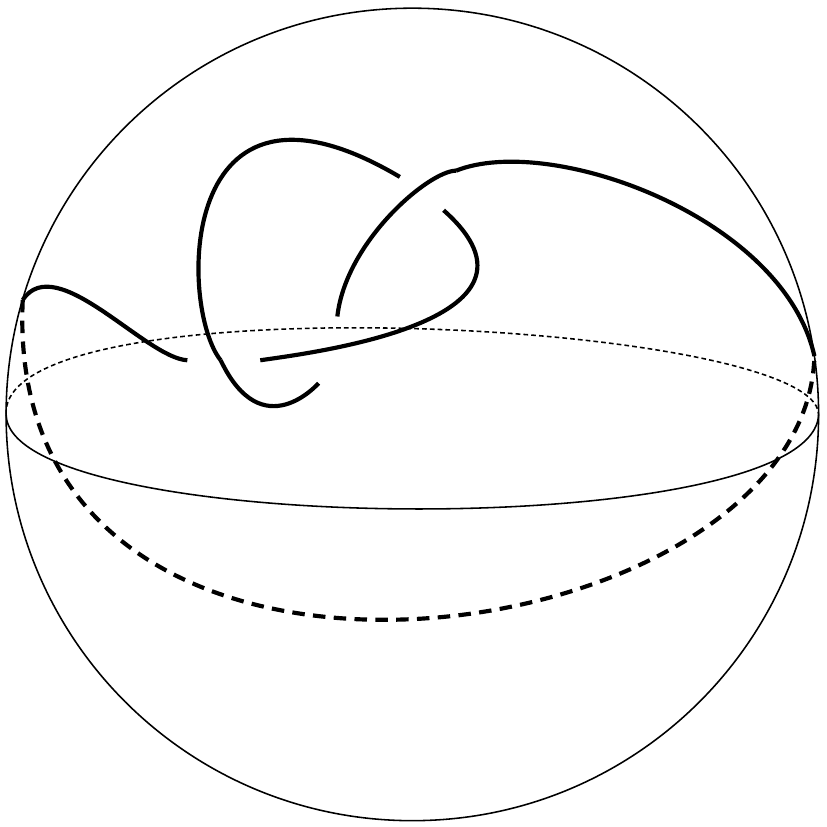}
    \caption{A spherical diagram of the Trefoil.} \label{fig:tref}
\end{figure}

A spherical diagram is still a knot diagram of $K$ in the usual sense. In particular, knots having isotopic spherical diagrams are isotopic.
\smallskip

We may now prove Theorem \ref{thm:main}.
\smallskip

{\em Proof of Theorem \ref{thm:main}.} Let $X=(x_0,\dots ,x_{n-1})$ be tuple of points in $\mathbb{R}^{3}$ in general position. Let $K$ be the polygonal knot formed by segments $[x_0,x_1]\cup [x_1,x_2]\cup\cdots\cup [x_{n-1},x_0]$. We show that $\sgeom(X)$ determines uniquely the type of $K$.  Let us consider the radial projection $\pi_{x_0}(K - \{x_0\})$ of $K\- \{x_0\}$ emitted from $x_0$.  

\begin{remark}\label{rem:proj} (a) $\pi_{x_0}$ maps each semi-open interval $(x_0,x_1]$ and $(x_0,x_{n-1}]$ into a single point. We thus have that the corresponding spherical diagram induced by $\pi_{x_0}(K)$ is a knotoid, denoted by $\mathcal K$.

(b) Since $K$ is polygonal with points in generic position then the shadow $\pi_{x_0}(K- \{x_0\})$ has no tangency point.

(c) $\pi_{x_0}(K)$ may have multiple intersection points. These can be fixed/modified as explained above.
\end{remark}

We notice that any planar isotopy between knotoids can be extended to an isotopy of knots by fixing $x_0$ and keeping the segments $[x_{n-1},x_0]$ and $[x_0,x_1]$ straight while doing the isotopy.

Therefore, by Lemma \ref{lem:gauss}, it is enough to show that the Gauss diagram associated to knotoid $\mathcal K$ is completely determined by $\sgeom_{\mathsf{Aff}}(X)=(M_{\mathsf{Aff}}(X), M_\wed(\inc(X)))$ (implying that $K$ can also be uniquely reconstructed from such strong geometry).
To this end, we show that $\sgeom_{\mathsf{Aff}}(X)$ determines the following two points.
\smallskip

(A) the pairs of arcs that intersect (and which one is over/under the other one) together with the corresponding sign,

(B) the order of the intersections (if more than one) along a given arc in $\mathcal K$. 
\smallskip

For (A), let $\beta=\pi_{x_0}([x_i,x_{i+1}])$ and $\beta'=\pi_{x_0}([x_{i'},x_{i'+1}])$ be two arcs in  $\mathcal K$. Suppose that these arcs intersect in a simple point. 
Then, $\beta$ is over $\beta'$ if the Radon partition of the set $\{x_0,x_i,x_{i+1},x_{i'},x_{i'+1}\}$ is given by $\{x_0,x_{i},x_{i+1}\}\sqcup \{x_{i'},x_{i'+1}\}$, see Figure \ref{fig:rad}.

 \begin{figure}[H]
 \centering
\includegraphics[width=.5\textwidth]{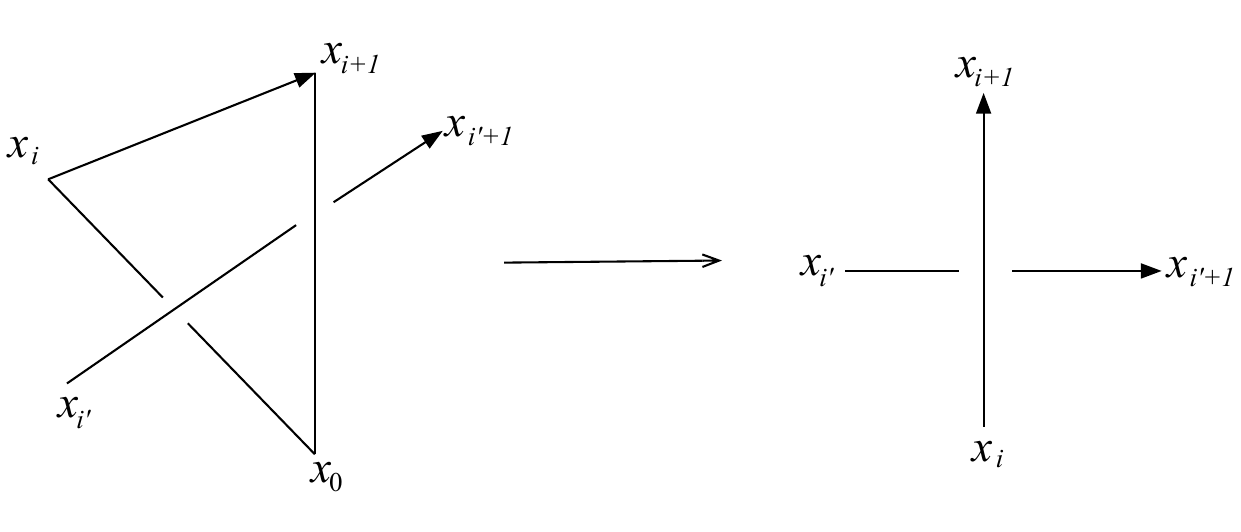}
    \caption{Radon partition and crossing in $\mathcal K$ from the witness $x_0$.} \label{fig:rad}
\end{figure}

Furthermore, if arc $\pi_{x_0}([x_i,x_{i+1}])$ intersect arc $\pi_{x_0}([x_{i'},x_{i'+1}])$ then the oriented arcs (from $x_i$ to $x_{i+1}$ and from $x_{i'}$ to $x_{i'+1}$) on the sphere meet at a positive crossing if and only if $\chi(x_i,x_{i+1},x_0,x_{i'+1})=+1$.
\smallskip

For (B), suppose that both arcs $\beta'=\pi_{x_0}([x_{i'},x_{i'+1}])$ and  $\beta''=\pi_{x_0}([x_{i''},x_{i''+1}])$ intersect arc $\beta = \pi_{x_0}([x_{i},x_{i+1}])$. The order of the intersection, say from $\pi_{x_0}(x_i)$ to $\pi_{x_0}(x_{i+1})$ is given by the chirotope $\chi_{x_0}$ according with the rule given in Figure \ref{fig:wit}. We notice that, by Proposition \ref{prop:wit2}, 
$\chi_{\wedge, x_0}$ can be completely determined from $\chi_{\Lambda}$. 

 \begin{figure}[H]
 \centering
\includegraphics[width=.4\textwidth]{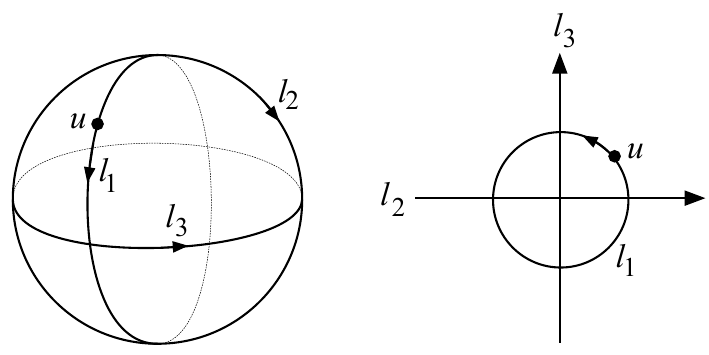}
    \caption{If $\chi_{x_0}(1,2,3)=+1$ then, by walking along $l_1$ following its direction, say from $u$, $l_1$ meets $l_3$ negatively, then $l_2$ positively, then $l_3$ positively and finally $l_2$ negatively.} \label{fig:wit}
\end{figure}

Let $l_1=i\wedge (i+1), l_2=i' \wedge (i'+1)$ and $l_3=i'' \wedge (i''+1)$. Then, by following Figure \ref{fig:wit} with $\chi_{x_0}(1,2,3)= +1$ we have that
$$\text{ if } \beta \text{ crosses }\left\{\begin{array}{l}
\beta' \text{ positively and }\beta'' \text{ positively}  \text{ then }  \beta \text{ meets } \beta'' \text{ before } \beta',\\
\beta' \text{ positively and }\beta'' \text{ negatively}  \text{ then }  \beta \text{ meets } \beta' \text{ before } \beta'',\\
\beta' \text{ negatively and }\beta'' \text{ negatively}  \text{ then }  \beta \text{ meets } \beta'' \text{ before } \beta',\\
\beta' \text{ negatively and }\beta'' \text{ positvely}  \text{ then }  \beta \text{ meets } \beta' \text{ before } \beta''.\\
\end{array}\right.$$
The result follows.
 \hfill$\square$


\section{Spatial graphs}\label{sec:spatgraknot}

A {\em spatial graph} is an embedding of a finite graph in $\rth$, that is, the vertices are distinct points and the edges are simple curves  between them in such a way that any two curves are either disjoint or meet at a common vertex. A spatial graph is called {\em linear} if each edge is a straight line segment.   Two spatial graphs are said to be {\em equivalent} if they are {\em ambiently} isotopic \cite{Kauff1,Yam}. 

\begin{theorem}\label{thm:main2} Let ${R}(G)$ and ${R}'(G)$ be two linear spatial representations of $G$. Let
$X$ and $X'$ be the corresponding set of points in $\rth$ associated to the vertices in such representations. 
If  $\sgeom_{\mathsf{Aff}}(X)$ and $\sgeom_{\mathsf{Aff}}(X')$ are isomorphic then ${R}(G)$ is equivalent to ${R}'(G)$.
\end{theorem}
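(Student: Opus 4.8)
The plan is to reduce the statement for linear spatial graphs to the knot case (Theorem~\ref{thm:main}) by treating each edge of $G$ separately and then gluing the information back together combinatorially. First I would fix a vertex $x_0$ of $R(G)$ and consider the radial projection $\pi_{x_0}$ to $\stw$ as in the proof of Theorem~\ref{thm:main}; the image is a \emph{graphoid}, that is, a spatial-graph analogue of a knotoid, in which the edges incident to $x_0$ collapse to the basepoint $\pi_{x_0}(x_0)$ and every other edge $[x_i,x_{i+1}]$ maps to an arc. Exactly as in Remark~\ref{rem:proj}, general position of $X$ guarantees no tangency points and only finitely many (fixable) multiple intersections, so the projection yields a well-defined graphoid diagram.

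Next I would establish, in full analogy with Lemma~\ref{lem:gauss}, that a \emph{graphoid diagram is determined by its Gauss diagram} (this is \texttt{Lemma~\ref{lem:gaussgraphdiag}}, promised in the introduction): the faces of the graphoid diagram correspond to valid travels in the Gauss diagram, the only new feature being that a valid travel arriving at a graph-vertex of the diagram (rather than at a basepoint end) continues along the cyclic order of arcs around that vertex, an order that is itself part of the data. The argument is the \guillemotleft turn left at each crossing\guillemotright\ bookkeeping of Lemma~\ref{lem:gauss} with the extra vertex-turning rule inserted, so I would present it as a short adaptation rather than a fresh proof.

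The core step is then to show that $\sgeom_{\mathsf{Aff}}(X)$ determines the Gauss diagram of this graphoid. This decomposes into: (A) which pairs of arcs cross, with over/under and sign, recovered from the Radon partition of $\{x_0,x_i,x_{i+1},x_{i'},x_{i'+1}\}$ and from $\chi(x_i,x_{i+1},x_0,x_{i'+1})$ exactly as in the proof of Theorem~\ref{thm:main}; (B) the order of intersections along each arc, recovered from the witness chirotope $\chi_{\wedge,x_0}$ via Figure~\ref{fig:wit}, which by Proposition~\ref{prop:wit2} is itself determined by $\chi_\Lambda$; and (C) the combinatorial attaching data, namely the cyclic order in which the arcs (and the collapsed edges) emanate from each projected vertex $\pi_{x_0}(x_j)$, which is read off from the affine chirotope $M_{\mathsf{Aff}}(X)$ by looking at the signs $\chi(x_0,x_j,x_k,x_\ell)$ for the relevant neighbours. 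Since an isomorphism of strong geometries preserves all of $M_{\mathsf{Aff}}$, $M_\wed$, and hence $M_{\wit_{x_0}}$, points (A)--(C) give identical Gauss diagrams for $X$ and $X'$; by the graphoid analogue of Lemma~\ref{lem:gauss} the graphoid diagrams are planar-isotopic, and, as in Theorem~\ref{thm:main}, such a planar isotopy extends to an ambient isotopy of $\rth$ fixing $x_0$ and keeping the collapsed edges straight, whence $R(G)$ is equivalent to $R'(G)$.

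I expect the main obstacle to be item (C) together with the correct formulation of the graphoid Gauss-diagram reconstruction: unlike in the single-loop knotoid case, a diagram vertex of degree $\ge 3$ forces one to track a genuine cyclic (rotation) system at each vertex and to verify that the \guillemotleft valid travel\guillemotright\ formalism still produces exactly the faces of the diagram when travels are allowed to pivot through these higher-degree vertices. Getting the interaction between the rotation system at vertices and the turning rules at crossings consistent — and checking that all of this data is genuinely extracted from the chirotopes rather than smuggled in — is where the real care is needed; the rest is a direct transcription of the arguments already given for Theorem~\ref{thm:main}.
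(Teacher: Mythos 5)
Your proposal follows essentially the same route as the paper: radial projection from a vertex to obtain a graphoid, reconstruction of the graphoid diagram from its Gauss diagram via the valid-travel argument with the extra vertex-turning rule, and recovery of the crossing data, crossing order, and the cyclic rotation system around each vertex (your item (C)) from $\sgeom_{\mathsf{Aff}}(X)$ exactly as the paper does. The only point you omit is the paper's brief reduction to the case of a connected diagram (needed so that face boundaries are connected for the travel argument), which it dispatches in one sentence.
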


Let us briefly recall the notion of spatial graphoids needed for the proof of Theorem \ref{thm:main2}. 


A {\em graphoid} is an equivalence class of {\em graphoid diagrams}. A {\em graphoid diagram} is a graph with various distinguished degree-one vertices
generically immersed in $\mathbb{S}^2$, where each double point is decorated as a classical crossing labeled over/under, see Figure \ref{figvirtualgraph}.

 \begin{figure}[H]
 \centering
\includegraphics[width=.25\textwidth]{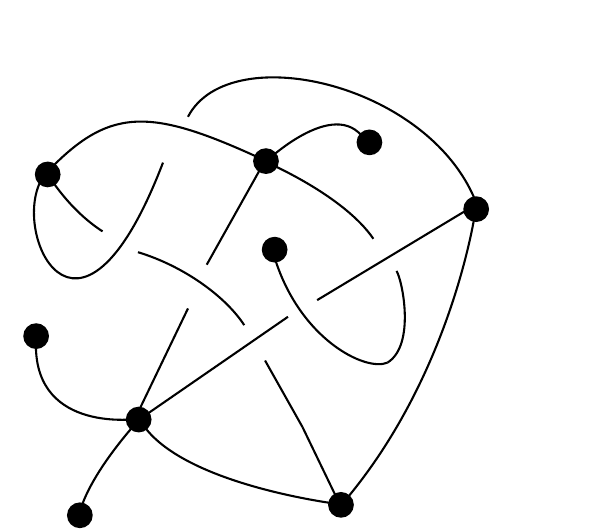}
    \caption{A graphoid with four distinguished degree-one vertices.} \label{figvirtualgraph}
\end{figure}

Graphoids have been studied in different contexts, for instance, in connection with a topological structure that arises in proteins \cite{GKP}. A Gauss code is associated to any graphoid diagram $\mathcal G$ as follows.  We choose an orientation for each edge. We then write down the Gauss code for the shadow by recording whether each arrival at a crossing is an over-crossing (O) or an under-crossing (U), we can also label each crossing by its sign $+$ or $-$ according to handedness’ rule, see Table \ref{tab1}. The difference with usual knots is that we add one more piece of information. For each vertex $v$ of $G$, we consider the sequence of edges incident to $v$, appearing in counterclockwise order around $v$. These {\em edges sequences} are considered up to cyclic permutation.
\medskip

\begin{center}
\captionof{table}{Gauss code of the spatial graphoid given in Figure \ref{figvirtualgraph}.}\label{tab1}
\begin{tabular}{cl}
\smash{\raisebox{-.5\height}{\includegraphics[height=3.9cm]{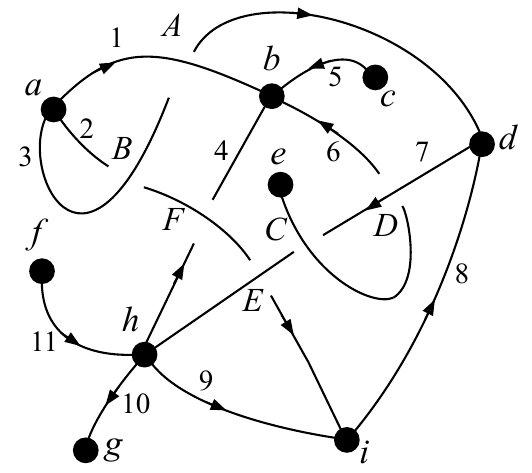}}}&
{\small  \begin{tabular}{l}
\underline{Gauss code}\\
$a  1 A O_+  b$\\
$a 2 B U_- F O_+E U_+ i$\\
$a 3 B O_- A U_+ d$\\
$b 4 F U_+ h$\\
$b 5 c$ \\
$b 6 D U_- C O_- e$ \\
$d 7 D O_- C U_- E O_+ h$\\
$d 8 i$\\
$i 9 h$\\
$h 10 g$\\
$h 11 f$\\
\end{tabular}}
\end{tabular}
\end{center}
\medskip

More conveniently, Gauss codes can be understood as Gauss  \lq arrow\rq\  diagrams (as for knotoids). 
We also consider  edges sequences around each vertex. We label points along the edges according to the crossing sequences of the Gauss code and draw an arrow between each pair of occurrences of the same label. The arrow is oriented from the undercrossing edge to the overcrossing edge and labeled with the sign of the crossing. We notice that once the arrows are drawn, the labels on the edges are redundant, and can thus be removed. In general, there is no canonical choice for \lq the  immersion\rq  of the graph. Even if the underlying graph has a planar embedding, the order of the edges around the vertices may prevent us from using it, see Table \ref{tab2}.
\medskip

\begin{center}
\captionof{table}{Gauss diagram of the graphoid diagram given in Figure \ref{figvirtualgraph}.}\label{tab2}
\begin{tabular}{cl}
\smash{\raisebox{-.5\height}{\includegraphics[height=3.9cm]{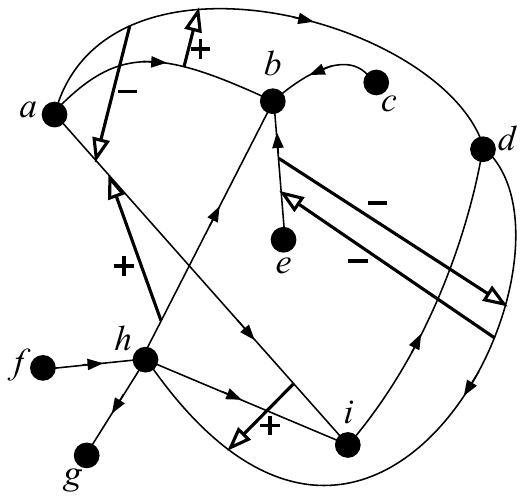}}}&
{\small  \begin{tabular}{l}
\underline{Edge sequences}\\
$a :1,3,2$\\
$b :1, 4, 6, 5$\\
$c : 5$\\
$d :3, 7, 8$\\
$e : 6$ \\
$f : 11$\\
$g : 10$\\
$h :4, 11, 10, 9, 7$\\
$i : 8, 2, 9$
\end{tabular}}
\end{tabular}
\end{center}
\medskip



\begin{lemma}\label{lem:gaussgraphdiag} A graphoid diagram is determined by its Gauss diagram.
\end{lemma}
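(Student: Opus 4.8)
The plan is to extend the face‑reconstruction argument used for Lemma~\ref{lem:gauss} to the graphoid setting. As there, we shall show that the set of faces of a graphoid diagram $\mathcal G$ (viewed as a $2$-cell embedding into $\stw$), together with the incidence relation ``two faces share an edge'', can be recovered from its Gauss diagram; this forces the cellular structure of $\mathcal G$, hence $\mathcal G$ itself up to planar isotopy. The over/under decoration of each crossing is recorded by the orientation of the corresponding arrow, so it suffices to recover the underlying shadow. The only genuinely new feature compared with knotoids is that $\mathcal G$ carries vertices of arbitrary degree; but the Gauss diagram now also records the edges sequences, i.e. the counterclockwise cyclic order of the edges around each vertex (see Table~\ref{tab2}), and this is exactly the data needed to describe how the boundary walk of a face behaves when it passes through a vertex of $G$.

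First I would set up the notion of a \emph{valid travel} in the Gauss diagram, reusing verbatim the crossing rule from the proof of Lemma~\ref{lem:gauss}: a boundary walk $W$ of a face must turn ``left'' at each crossing, which translates into the arrow rule $\mathbf N\to\mathbf E\to\mathbf S\to\mathbf W\to\mathbf N$ of Figure~\ref{fig:walkTrial} (a crossing being locally identical in a graphoid diagram). At a vertex $v$ of $G$ of degree $k\ge 1$, the walk $W$ enters $v$ along some incident edge and must leave along the cyclically next edge of the edges sequence at $v$; thus this ``turn'' is again prescribed by the Gauss diagram. For $k=1$, i.e. at a distinguished degree‑one vertex, the rule specialises to ``return along the same edge with reversed orientation'', recovering the behaviour already observed for knotoid endpoints in Figure~\ref{fig:walkend}. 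A travel $T$ obeying the crossing rule and, at each visit of a vertex, jumping to the cyclically next incident edge, is declared \emph{valid}; by construction the corresponding walk $W$ bounds a face of $\mathcal G$.

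Then I would argue, exactly as in Lemma~\ref{lem:gauss}, that every face arises from a valid travel, and that two valid travels traversing a common sub‑arc of an edge of $\mathcal G$ in opposite directions correspond precisely to the two faces adjacent along that sub‑arc. Hence the Gauss diagram determines the faces of $\mathcal G$ and their edge‑adjacencies, hence the whole cellular decomposition of $\mathcal G\hookrightarrow\stw$, and therefore $\mathcal G$ up to planar isotopy. No realizability argument is needed: since we start from a genuine graphoid diagram, the valid travels automatically reassemble into a sphere, so there is nothing to check about Euler characteristic or embeddability.

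The main obstacle is the correct treatment of passages through vertices of degree $\ge 2$. In the knotoid case every point other than a crossing had a unique continuation of the walk, whereas a degree‑$k$ vertex $v$ may be visited several times along the boundary of a single face, and each visit must independently select the cyclically next edge of the edges sequence of $v$; one must verify that this local rule is exactly what is forced by the two global requirements ``$W$ turns left at every crossing'' and ``$W$ respects the recorded rotation at every vertex'', and that it remains consistent when an edge, or the vertex $v$ itself, occurs several times on one face boundary. Once this local analysis at vertices is carried out, the remainder is a routine adaptation of the proof of Lemma~\ref{lem:gauss}.
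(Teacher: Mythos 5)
Your proposal takes essentially the same route as the paper: reuse the valid-travel/face-reconstruction argument of Lemma~\ref{lem:gauss}, keep the crossing rule unchanged, and use the recorded edges sequence to prescribe the turn of the boundary walk at each vertex of $G$ (with the degree-one case reducing to the knotoid endpoint behaviour). The one point to fix is the direction of that turn: to remain consistent with the left-turn rule at crossings, the walk arriving at $v$ along $e$ must leave along the edge \emph{preceding} $e$ in the counterclockwise edges sequence (i.e.\ the next edge clockwise), not the cyclically next one as you state -- this is exactly the consistency check you flag at the end, and the paper resolves it in that direction.
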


\begin{proof} We essentially follow the same arguments as those given in the proof of Lemma \ref{lem:gauss}. We only need to adapt the construction of a valid travel $T$ in the Gauss diagram of a spatial graphoid.  We have to determine the rule when $T$ gets to a vertex of degree $\geq 3$ (not existing in knotoids), that is, we have to determine the turns of the corresponding walk $W$ when going through vertices in $\mathcal G$. For this, we use the edge sequence information, if $W$ arrives at $v$ along an edge $e$ then it leaves $v$ by following the edge $f$ appearing before $e$ in counterclockwise direction, that is, the next edge in clockwise direction. This new rule forces that $W$ always turns \guillemotleft left\guillemotright \ at each vertex staying in the same face, see Figure \ref{figPTgraph}.

 \begin{figure}[H]
 \centering
\includegraphics[width=.21\textwidth]{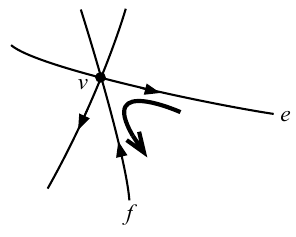}
    \caption{Turn of $W$ at vertex $v$.} \label{figPTgraph}
\end{figure}

\end{proof}

\begin{remark} Lemma \ref{lem:gaussgraphdiag} can be naturally extended to {\em virtual} graphoids, that is, graphoids having {\em virtual crossings}. The virtual notion was introduced by Kauffman in his seminal paper \cite{Kauff}. The idea is not that a virtual crossing is just an ordinary degree-four vertex but rather that a virtual crossing is not really there (hence the name \lq virtual\rq). 
\end{remark}

We may now prove Theorem \ref{thm:main2}.
\smallskip

{\em Proof of Theorem \ref{thm:main2}.} We shall mimic the proof of Theorem \ref{thm:main}. 
Let $X=(x_0,\dots ,x_{n-1})$ be the tuple of points in $\mathbb{R}^{3}$ associated to the vertices $(v_0,\dots ,v_{n-1})$ in the spatial representation of $R(G)$. Let $\pi_{x_0}(R(G))$ be the radial projection of $R(G)$ emitted from $x_0$. It can be checked that such projection induces a graphoid, say $\mathcal G$, with as many distinguished vertices as the degree of the vertex corresponding to point $x_0$ (this can be justified in a similar way as for knotoids arising from radial projections of polygonal knots). 
If the diagram is not connected, then the corresponding components are unlinked, and a sequence of isotopies for each component leads to an isotopy for the whole spatial graph. Therefore, we can suppose without loss of generality that the diagram is connected. This implies that the faces of the diagram have connected boundaries and thus the arguments used in the proof of Theorem \ref{thm:main} work similarly in this case.
\smallskip

Let $\mathcal{G}'$ be the spatial graphoid arising from $R'(G)$. We notice that any planar isotopy between the spatial graphoids  $\mathcal G$ and $\mathcal{G}'$ can be extended to an isotopy between $R(G)$ and $R'(G)$ by fixing $x_0$ and keeping the segments $[x_0,x_i]$, for each $x_i$ where its associated vertex $v_i$ is a neighbor of $v_0$, straight while doing the isotopy.
Therefore, by Lemma \ref{lem:gaussgraphdiag}, it is enough to show that the Gauss diagram associated to $\mathcal G$ is completely determined by $\sgeom_{\mathsf{Aff}}(X)=(M_{\mathsf{Aff}}(X), M_\wed(\inc(X)))$ (implying that $R(G)$ can also be uniquely reconstructed from such strong geometry). 
\smallskip

As discussed in the proof of Theorem \ref{thm:main}, $\sgeom(X)_{\mathsf{Aff}}$ determines in the diagram : the pairs of intersecting arcs (and which one is over/under the other one) together with the corresponding sign, the local type of multiple intersection and thus their local modifications and the order of the intersections (if more than one) along a given arc. Although all of these are needed to determine $\mathcal G$, we still have to show that $\sgeom(X)_{\mathsf{Aff}}$ also determines the order of appearance, say counter clockwise, of the edges around each vertex. In fact this order is determined by $M_{\mathsf{Aff}}(X)$ itself. Indeed, the two pieces of oriented equators $\pi_{x_0}([x_i, x_j])$, $\pi_{x_0}([x_i, x_{j'}])$ meet either positively or negatively depending on the sign of $\Delta( \pi_{x_0}(x_i),\pi_{x_0}(x_j), \pi_{x_0}(x_{j'})) = \chi(x_0, x_i, x_j, x_{j'})$. But the data of the signs of such crossings between pieces of equators is equivalent to the order at which the $x_j$'s appear around $x_i$ (on the unit sphere centered on $x_0$). 
This is due to the fact that the cyclic order of the strands is given by the sign of each pair (as explained above) and, by Proposition \ref{prop:wit1}, the sign of a pair $(x_i \wedge x_j), (x_i \wedge x_{j'})$ is given by the sign $\chi(x_i, x_j, x_{j'})$. Essentially, the circular order around $x_i$ is the `twofold witnessed' (first by $x_0$ and then by $x_i$) rank $2$ chirotope.
\hfill$\square$
\smallskip

We end with the following

\begin{question} Let $R(G)$ be a linear spatial representations of $G$. Let $X$ be the set of points in $\rth$ associated to the vertices of such representation. 
Is it true that $R(G)$ is determined by $M_{\mathsf{Aff}}(X)$ ?
\end{question}

\section{Further research directions}\label{sec:discuss}
In the process of this work, a great deal on the structure, properties and extensions of the notion of strong geometry have been revealed. 

\subsection{Universality}
Ringel’s isotopy conjecture asked whether any two arrangements of lines with the same topological type can be isotoped into each other. In other words, Ringel's conjecture asked whether the realization space of rank 3 matroids is connected.  The famous Mn\"ev's Universality Theorem \cite{M} provides a spectacular negative solution, it states 
that every semi-algebraic variety is stably equivalent to the realization space of some oriented matroid, that is, the realization space of oriented matroids can have the topology of any
semi-algebraic set and, in particular, it can be disconnected.  As mentioned above, oriented matroids do not capture the combinatorics of the cell configuration induced by the spanned lines contrary to strong geometries that completely does. In the same flavor as for oriented matroids, an `inclusion-type' generalization of the universality result \`a la Mn\"ev for rank 3 strong geometries is investigated \cite{G}. 

\subsection{$k$-equivalence}
The notion of knotoid is needed to deal with the fact that the radial projection is from a point belonging to the polygonal knot. This might be avoided if given two set of points $P$ and $Q$ having the same strong geometry then one can show the existence of two points $p$ and $q$ such that $P\cup\{p\}$ and  $Q\cup\{q\}$ also have the same strong geometry. In this case, we just can add this additional point from which the radial projection is applied.  Figuring out the existence of these new points is not difficult when $d=2$ but it is not straightforward for $d\ge 3$. 
\smallskip

It turns out that the notion of strong geometry can be generalized in the planar case as follows. We consider the lines spanned by both the set of the original points and the points arising from line intersections. The latter induce a new set of lines and therefore new intersection points from which new lines can be spanned and so on. These generalizations happen to be closely related to the notion of {\em $k$-equivalence} between two configurations of points. A notion of {\em $\infty$-equivalence} also naturally emanates. Such equivalences are recently studied in \cite{GP} in connection with some asymptotic rigidity properties. The case $k=2$ is related to the existence of a new point to be added to a strong geometry, as discussed above.  

All these investigations require further (much technical) extra work (in progress).

\subsection{Adjoints}
Figure \ref{fig:tran} gives an exemple of an oriented matroid of rank 3 with two realizations that generate non-isomorphic adjoints. Moreover, there exists examples of two non-isomorphic simple oriented matroids of rank 3 that have isomorphic adjoints, see \cite{BK,BK1} and \cite[Exercice 5.13 (a)]{BMS}. In the realizable case, Corollary \ref{cor:chir} asserts that if the adjoints are isomorphic then the oriented matroids are equal or opposite to each other.
\bigskip

{\bf Aknowledgement} The first author was supported by grant PRIM80-CNRS.



\begin{thebibliography}{99}

\bibitem{BK} A. Bachem, W. Kern, Adjoints of oriented matroids, {\em Combinatorica} {\bf 6} (1986), 299--308.

\bibitem{BK1} A. Bachem, W. Kern, Extension equivalences of oriented matroids, {\em European J. Combinatorics} {\bf 7} (1986), 193--197.

\bibitem{BW} A. Bachem, A. Wanka, Matroids without adjoints, {\em Geometriae Dedicata} {\bf 29} (1989), 311--315.

\bibitem{BMS} A. Bj\"orner, M. Las Vergnas, B. Sturmfels, N. White and G. Ziegler, Oriented Matroids,
{\em Encyclopedia of Mathematics and its Applications} {\bf 46} Cambridge University Press (1999).

\bibitem{CE} G. Cairns and D.M. Elton, The planarity problem for signed Gauss words, {\em J. Knot Theory and Its Ramifications} {\bf 2} (1993) 359--367.

\bibitem{CMMRA} J. Chappelon, L. Mart\'inez-Sandoval, L. Montejano, L.P. Montejano, J.L. Ram\'irez Alfons\'in, Codimension two and three Kneser transversals, {\em SIAM J. Discrete Mathematics}, {\bf 32} (2018), 1351--1363. 
 
\bibitem{Che} A.L.C. Cheung, Adjoints of a geometry, {\em Canad. Math. Bull.} {\bf 17} (1974) 363--365.

\bibitem{Cord1} R. Cordovil, Polarity and point extension in oriented matroids, {\em Linear Algebra Appl.} {\bf 90} (1987), 15--31.

\bibitem{Cord2} R. Cordovil, Sur les matro\"ides orient\'es de rang 3 et les arrangements de pseudodroites dans le plan projectif r\'eel, {\em European J. Combinatorics} {\bf 3} (1982), 307--318.

\bibitem{GP} X. Goaoc, A. Padrol, An asymptotic rigidity property of chirotope extensions, (2025), arXiv:2505.14189.

\bibitem{Good} J.E. Goodman, Proof of a conjecture of Burr, Gr\"unbaum and Sloane, {\em Disc. Math.} {\bf 32} (1980), 27--35.

\bibitem{GP} J.E. Goodman, R. Pollack, Multidimensional sorting, {\em SIAM J. Computing} {\bf 12} (1983), 484--507.

\bibitem{GP2} J.E. Goodman, R. Pollack, Semispace of configurations, cell complexes of arrangements, {\em J. Comb. Th. Ser. A} {\bf 37} (1984), 257--293.

\bibitem{G} B. Gros, Strong geometry : universality of realization spaces inclusion, in preparation.

\bibitem{GKP} N. G\"ug\"umc\"u, L.H. Kauffman, P. Pongtanapaisan, Spatial graphoids, {\em Aequat. Math.}  {\bf 98} (2024), 303--332.

\bibitem{Kauff} L.H. Kauffman, Virtual knot theory, {\em Europ. J. Comb.} {\bf 20} (1999), 663--691.

\bibitem{Kauff1} L.H. Kauffman, Invariants of graphs in three-space, {\em Trans. Amer. Math. Soc.}, {\bf 311}(2) (1989), 697--710.

\bibitem{LV} M. Las Vergnas, Personal communication (1995).

\bibitem{M} N.E. Mn\"ev, The universality theorem on the oriented matroid stratification of the space of real matrices, {\em In Discrete and
Computational Geometry : Papers from the DIMACS Special Year (1990)}, J. E. Goodman, R. Pollack, and W. Steiger,
Eds., vol. 6 of DIMACS Series in Discrete Mathematics and Theoretical Computer Science, DIMACS/AMS, pp. 237--244.

\bibitem{Tur} V. Turaev, Knotoids, {\em Osaka Journal of Mathematics} {\bf 49}(2) (2010), 195--223.

\bibitem{Yam} S. Yamada, An Invariant of spatial graphs, {\em J. Graph Theory}, {\bf 13}(5) (1989), 537--551.

\end{thebibliography}
\end{document}